\theoremstyle{plain}
\newtheorem{thm}{\protect\theoremname}[section]
  \theoremstyle{definition}
  \newtheorem{defn}[thm]{\protect\definitionname}
  \theoremstyle{remark}
  \newtheorem{rem}[thm]{\protect\remarkname}
  \theoremstyle{plain}
  \newtheorem{lem}[thm]{\protect\lemmaname}
  \theoremstyle{plain}
  \newtheorem{prop}[thm]{\protect\propositionname}
  \theoremstyle{remark}
  \newtheorem*{rem*}{\protect\remarkname}
  \providecommand{\definitionname}{Definition}
  \providecommand{\lemmaname}{Lemma}
  \providecommand{\propositionname}{Proposition}
  \providecommand{\remarkname}{Remark}
\providecommand{\theoremname}{Theorem}
\begin{document}
\global\long\def\pp{\varphi}
 \global\long\def\sk{\underrightarrow{Ker}}
 \global\long\def\abs#1{\vert#1\vert}
 \global\long\def\agm#1{\underset{#1}{*}}
 \global\long\def\FF{\mathbb{F}}
 \global\long\def\aij#1#2#3{#1_{#3}^{(#2)}}
\global\long\def\cayfl{X(\FF_{\ell})}
\global\long\def\nn{n\in\mathbb{\mathbb{N}}}

\title{Canonicality of Makanin-Razborov Diagrams - Counterexample}

\author{Gili Berk}
\maketitle
\begin{abstract}
Sets of solutions to finite systems of equations in a free group,
are equivalent to sets of homomorphisms from a fixed f.p. group into
a free group. The latter can be encoded in a diagram, the construction
of which is valid also for f.g. groups. The diagram is known to be
canonical for a fixed f.g. group with a fixed generating set. In this
paper we prove that the construction depends on the chosen generating
set of the given f.g. group.
\end{abstract}

\section*{Introduction}

For a given finite system of equations $\Phi$ over a free group $\FF_{k}$,
there is a natural associated f.g. group $G(\Phi)$. If the system
$\Phi$ is defined by the coefficients $a_{1},...,a_{k}$, the unknowns
$x_{1},...,x_{n}$ and the equations $\{w_{i}(a_{1},...,a_{k},x_{1},...,x_{n})=1\}_{i=1}^{s}$,
then $G(\Phi)=\langle a_{1},...,a_{k},x_{1},...,x_{n}\vert\{w_{i}\}_{i=1}^{s}\rangle$.
If there are no coefficients then $G(\Phi)=\langle x_{1},...,x_{n}\vert\{w_{i}\}_{i=1}^{s}\rangle$.
There is a correspondence between solutions of the system $\Phi$
and homomorphisms $h:G(\Phi)\rightarrow\FF_{k}$ (for which the restriction
$h(a_{j})=a_{j}$ holds $\forall1\leq j\leq k$, in the case with
coefficients). Therefore, the study of one is equivalent to the study
of the other. In addition, every f.g. group $G$ (not necessarily
f.p.) has a canonical finite (restricted) factor set $\{q_{i}:G\rightarrow L_{i}\}_{i=1}^{m}$
through which any (restricted) homomorphism $h:G\rightarrow\FF_{k}$
factors, where the $L_{i}$ are (restricted) limit groups (see Section
7 in \cite{key-7}). So in order to understand the set of (restricted)
homomorphisms from a f.g. group into the a free group, it is sufficient
to study the set of (restricted) homomorphisms from a (restricted)
limit groups into a free group. 

For a given (restricted) limit group $G$ with a finite generating
set $\boldsymbol{g}$, the set of (restricted) homomorphisms from
$G$ to $\FF_{k}$ is encoded in the canonical (restricted) Makanin-Razborov
diagram (see \cite{key-7}). The diagram is constructed iteratively,
so that each level is comprised of (restricted) maximal shortening
quotients of freely indecomposable components of groups from the previous
level. These maximal shortening quotients are taken with respect to
generating sets inherited from $\boldsymbol{g}$. Therefore, to conclude
whether or not a (restricted) Makanin-Razborov diagram is dependent
on the generating set, it is sufficient to examine the canonicality
of the set of (restricted) maximal shortening quotients of a freely
indecomposable (restricted) limit group (up to isomorphism of shortening
quotients).

It transpires that in the restricted case a counterexample exists.
We give a restricted limit group $D_{w,z}$ and its essential JSJ
decomposition, and compute its strict restricted shortening quotients
with respect to two generating sets $\boldsymbol{g}$ and $\boldsymbol{u}$.
The generating sets $\boldsymbol{g}$ and $\boldsymbol{u}$ are chosen
such that there cannot be an isomorphism of shortening quotients between
a strict restricted $\boldsymbol{g}$-shortening quotient and a strict
restricted $\boldsymbol{u}$-shortening quotient, because they have
different abelianizations.

The lack of canonicality of MR-diagrams has the additional implication
that maximal shortening quotients of a limit group in one generating
set are not always isomorphic (as shortening quotients) to shortening
quotients in another generating set. Otherwise, the canonicality of
the MR-diagram - or rather of a modified diagram in which only properly
maximal shortening quotients appear - would easily ensue (Lemma \ref{lem:TFAE, section 4}).
However, for some limit groups it can be shown that certain maximal
shortening quotients in one generating set are indeed isomorphic to
shortening quotients in any other generating set.

The paper is organised as follows: Section \ref{sec:Preliminaries}
provides some terminology, notation and facts. Section \ref{sec:Counterexample}
is devoted to the description of the counterexample. That section
uses a particular word $w$ studied by S. V. Ivanov \cite{key-3}
and results of S. Heil \cite{key-2} regarding  JSJ forms of doubles
of free groups of rank 2. Section \ref{sec:examples} describes some
limit groups for which certain maximal shortening quotients in one
generating set are generator-independent shortening quotients. Test
sequences play a significant role in the arguments presented in that
section.

I would like to thank Zlil Sela and Chloé Perin for their invaluable
help and insight.

\section{Preliminaries\label{sec:Preliminaries}}

Throughout this paper, $\mathbb{F}$ will denote some finitely generated
free group with a fixed free generating set, and $X(\FF)$ will denote
its Cayley graph with respect to this generating set. For a given
pointed simplicial $G$-tree $(T,t)$ and an element $g\in G$, denote
$\vert g\vert_{T}=d(t,g.t)$ the \emph{displacement length}, where
$d$ is the simplicial metric on $T$. If $G=\mathbb{F}$ (with the
fixed set of generators) then the length of $g\in\mathbb{F}$ can
be measured in the corresponding Cayley graph $X(\mathbb{F})$ with
respect to the basepoint $1_{\FF}$. For simplicity, when the length
is measured in $X(\FF)$ we will write $\abs g$ rather than $\abs g_{X(\FF)}$.
The \emph{translation length} of an element $g\in G$ in the $G$-tree
T is $tr_{T}(g)=\underset{h\in G}{min}\,\abs{hgh^{-1}}_{T}$.

We refer to JSJ decompositions in the sense of Rips and Sela \cite{key-1}.

\subsubsection*{Shortness with respect to a generating set}

It will be helpful to use terminology and notation which keep track
of the generating set with respect to which the shortening is done.
\begin{defn}
\cite{key-7} For a group $G$ with a finite generating set $\boldsymbol{g}=(g_{1},...,g_{k})$:
\end{defn}
\begin{itemize}
\item A homomorphism $h\in Hom(G,\mathbb{F})$ is called \emph{$\boldsymbol{g}$-shortest}
if $\underset{i}{max}\vert h(g_{i})\vert\leq\underset{i}{max}\vert\iota_{c}\circ h\circ\varphi(g_{i})\vert$
for all \linebreak{}
$c\in\mathbb{F}$ and all $\varphi\in Mod(L)$, where $\iota_{c}$
is the conjugation by $c$.
\item Suppose $G$ is freely indecomposable, and let $L$ be a quotient
of $G$ with the quotient map\linebreak{}
$\eta:G\rightarrow L$. The pair $(L,\eta)$
is called a \emph{$\boldsymbol{g}$-shortening quotient} of $G$ if
$Ker\eta=\underrightarrow{Ker}h_{n}$ for some stable sequence of
$\boldsymbol{g}$-shortest homomorphisms $\{h_{n}\}_{n\in\mathbb{N}}\subseteq Hom(G,\mathbb{F})$.
Denote the set of \emph{$\boldsymbol{g}$}-shortening quotients of
$G$ as $SQ(G,\boldsymbol{g})$.
\item The set of $\boldsymbol{g}$-shortening quotients of $G$ is partially
ordered by the relation $\leq_{\boldsymbol{g}}$ given by\linebreak{}
 $(L,\eta)\leq_{\boldsymbol{g}}(M,\pi)$ if there exists an epimorphism
$\sigma:M\twoheadrightarrow L$ such that $\eta=\sigma\circ\pi$.
\emph{Maximal $\boldsymbol{g}$-shortening quotient} are maximal elements
in $SQ(G,\boldsymbol{g}$) with regard to the partial order $\leq_{\boldsymbol{g}}$.
Denote the set of maximal $\boldsymbol{g}$-shortening quotients as
$MSQ(G,\boldsymbol{g})$.
\end{itemize}

\begin{rem}
\label{rem:not Inn}In the definition above for a $\boldsymbol{g}$-shortest
homomorphism, it may be assumed that $\pp$ is not generated by elements
of $Inn(L)$, since the effect of right-composition with any element
of $Inn(L)$ can be emulated by left-composition with some element
of $Inn(\FF)$. (This observation will simplify things in Section
\ref{sec:examples}.)
\end{rem}
\begin{rem}
\label{rem:factorisation usefulness}One of the properties of $MSQ(G,\boldsymbol{g})$
is that any $h\in Hom(G,\mathbb{F})$ factors through some $(L,\eta)\in MSQ(G,\boldsymbol{g})$,
i.e. there exist $\varphi\in Mod(G)$ and $h'\in Hom(L,\mathbb{F})$
such that $h=h'\circ\eta\circ\varphi$. We will soon define a subset
$\widetilde{MSQ}(G,\boldsymbol{g})\subseteq MSQ(G,\boldsymbol{g})$
for which this property remains.
\end{rem}
\begin{defn}
\cite{key-7} An \emph{isomorphism of shortening quotients}, or an
\emph{SQ-isomorphism}, is a group isomorphism between two shortening
quotients (not necessarily with respect to the same generating set),
which in addition respects the quotient maps for some modular automorphism
of $G$ (i.e.\linebreak{} $\sigma:M\stackrel[Groups]{\sim}{\rightarrow}L$ for
$(L,\eta)\in SQ(G,\boldsymbol{g}),\,(M,\pi)\in SQ(G,\boldsymbol{u})$,
such that there exists $\pp\in Mod(G)$ for which $\eta\circ\pp=\sigma\circ\pi$.
In other words, the following diagram must commute: $\begin{array}{ccc}
G & \overset{\pp}{\longrightarrow} & G\\
{\scriptstyle \pi}\downarrow &  & \downarrow{\scriptstyle \eta}\\
M & \underset{\sigma}{\longrightarrow} & L
\end{array}$). In particular, if both shortening quotients are $\boldsymbol{g}$-shortening
quotients, then this is an \emph{isomorphism of $\boldsymbol{g}$-shortening
quotients}.
\end{defn}
Notice that it is possible for two $\boldsymbol{g}$-shortening quotients
to be isomorphic as groups but not as shortening quotients. Denote
$SQ(G,\boldsymbol{g})/\sim$ the set of equivalence classes of \emph{$\boldsymbol{g}$}-shortening
quotients of $G$, where $(L,\eta),(M,\pi)\in SQ(G,\boldsymbol{g})$
are equivalent iff they are SQ-isomorphic. (Note that this is indeed
an equivalence relation.) Likewise denote the set of equivalence classes
of maximal $\boldsymbol{g}$-shortening quotients as $MSQ(G,\boldsymbol{g})/\sim$.

\subsubsection*{Properly maximal shortening quotients}

For a limit group $G$ with a given generating set $\boldsymbol{g}$,
let $\widetilde{MSQ}(G,\boldsymbol{g})$ denote the set of \emph{properly
maximal $\boldsymbol{g}$-shortening quotients}, i.e. the subset of
$MSQ(G,\boldsymbol{g})$ consisting of the elements whose equivalence
classes are maximal with respect to the partial order defined on $MSQ(G,\boldsymbol{g})/\sim$
by $[(N,\eta)]\leq[(Q,q)]$ iff there exist an epimorphism $\tau:Q\twoheadrightarrow N$
and some $\pp\in Mod(G)$ such that the following diagram commutes:

\hspace{-0.5cm}$\begin{array}{ccc}
G & \overset{\pp}{\longrightarrow} & G\\
{\scriptstyle q}\downarrow &  & \downarrow{\scriptstyle \eta}\\
Q & \underset{\tau}{\twoheadrightarrow} & N
\end{array}$.

\hspace{-0.5cm}It is easy to check that this relation is well defined
on the equivalence classes. Additionally, this is indeed a partial
order: it is clearly reflexive and transitive. As for antisymmetry,
if $[(N,\eta)]\leq[(Q,q)]$ but also $[(Q,q)]\leq[(N,\eta)]$, then
in particular there exist two epimorphisms $\tau_{1}:Q\twoheadrightarrow N,$
$\tau_{2}:N\twoheadrightarrow Q$. Their composition is an epimorphism
$\tau_{2}\circ\tau_{1}:N\twoheadrightarrow N$, and by the Hopf property
for limit groups (end of Section 4 in \cite{key-7}) it follows that
$\tau_{2}\circ\tau_{1}$ is a group isomorphism. Consequently, also
$\tau_{1}$ is a group isomorphism, so indeed $[(N,\eta)]=[(Q,q)]$.

In a manner of speaking, $\widetilde{MSQ}(G,\boldsymbol{g})$ is sufficient
for the sake of studying $Hom(G,\FF)$, as the property that every
$h\in Hom(G,\FF)$ factors through some element of $MSQ(G,\boldsymbol{g})$
(up to composition with some $\pp\in Mod(G)$), is preserved by the
subset $\widetilde{MSQ}(G,\boldsymbol{g})$.

\subsubsection*{Strict shortening quotients}
\begin{defn}
\cite{key-7} A $\boldsymbol{g}$-shortening quotient $(L,\eta)$
is called a \emph{strict shortening quotient} if:

    \renewcommand{\labelenumi}{(\roman{enumi})}
\begin{enumerate}
\item For every rigid vertex group $G_{v}$ in the cyclic JSJ decomposition
of $G$, replace each neighbouring abelian vertex group by the direct
summand of its edge groups. Let $\tilde{G}_{v}$ be the subgroup of
$G$ generated by $G_{v}$ and by the centralisers of the edge groups
connected to $G_{v}$ in the new graph of groups. Then $\eta\vert_{\tilde{G}_{v}}$is
a monomorphism.
\item For every surface vertex group $S$ in the cyclic JSJ decomposition
of $G$, $\eta(S)$ is non-abelian, and boundary elements of $S$
have non-trivial images.
\item For every abelian vertex group $A$ in the cyclic JSJ decomposition
of $L$, let $\tilde{A}<A$ be the subgroup generated by all edge
groups connected to the vertex stabilised by $A$. Then $\eta\vert_{\tilde{A}}$
is a monomorphism.
\end{enumerate}
\end{defn}
Among the elements of $MSQ(G,\boldsymbol{g})$ there is at least one
strict maximal $\boldsymbol{g}$-shortening quotient (Lemma 5.10 in
\cite{key-7}).

It also worth noting that strictness is a property that is preserved
under SQ-isomorphisms, and consequently a strict shortening quotient
cannot be SQ-isomorphic to a non-strict shortening quotient.

\subsubsection*{Makanin-Razborov diagrams}

The Makanin-Razborov diagram of a limit group $G$ with a finite generating
set $\boldsymbol{g}$ is constructed iteratively, so that each level
is comprised of (restricted) maximal shortening quotients of freely
indecomposable components of groups from the previous level. These
maximal shortening quotients are taken with respect to generating
sets inherited from $\boldsymbol{g}$. For full detail see \cite{key-7}.
Due to the factorisation property of maximal shortening quotients,
noted in Remark \ref{rem:factorisation usefulness}, this diagram
encodes all the elements of $Hom(G,\FF)$.

\subsubsection*{Restricted Makanin-Razborov diagrams}

(cf. \cite{key-7})

Let $2\leq k\in\mathbb{N}$, and fix an ordered generating set $(y_{1},...,y_{k})$
for $\FF_{k}$. For a f.g. group $G$ and a finite ordered subset
$(\gamma_{1},...,\gamma_{k})\subseteq G$ which generates a proper
subgroup $\Gamma<G$, denote\linebreak{}
 $Hom_{\Gamma}(G,\FF_{k})=\{h\in Hom(G,\FF_{k}):\,\forall1\leq i\leq k\,\,h_{n}(\gamma_{i})=y_{i}\}$
the set of \emph{restricted }\linebreak{}
\emph{homomorphisms}. A \emph{restricted limit group relative to $(\gamma_{1},...,\gamma_{k})$}
is a limit group $L=G/\sk h_{n}$ for a sequence $\{h_{n}\}_{\nn}\subseteq Hom_{\Gamma}(G,\FF_{k})$.

Many of the definitions in the non-restricted case can be modified
to suit the restricted case, such as the \emph{relative Grushko decomposition
relative to $\Gamma$}, the canonical \emph{restricted cyclic JSJ
decomposition} of $L$, the\emph{ restricted modular group} $RMod(L)$,
the set of\emph{ restricted $\boldsymbol{g}$-shortening quotients}
and \emph{restricted maximal $\boldsymbol{g}$-shortening quotients},
denoted $RSQ(L,\boldsymbol{g})$ and $RMSQ(L,\boldsymbol{g})$ respectively,
and the \emph{restricted MR diagram with respect to }\textbf{\emph{$\boldsymbol{g}$}}.
It is of particular interest to note that $RMod(L)$ is generated
by the elements that generate $Mod(L)$ and also stabilise $(\eta(\gamma_{1}),...,\eta(\gamma_{k}))$,
and consequently\linebreak{}
$Inn(L)\nsubseteq RMod(L)$.

Just as in the non-restricted case, there exists at least one \emph{ strict
}element of $RMSQ(L,\boldsymbol{g})$.

\subsubsection*{Essential JSJ decompositions}
\begin{defn}
For some limit groups it is possible to modify the cyclic JSJ decomposition
to a canonical essential splitting, called the \emph{essential JSJ
decomposition} (cf. \cite{key-10} and \cite{key-11}). An \emph{essential
$\mathbb{Z}$-splitting} of a group is a splitting whose edge groups
are all maximal cyclic subgroups.
\end{defn}

\subsubsection*{Ivanov words}
\begin{defn}
\cite{key-3,key-2} A \emph{C-test word in n letters} is a non-trivial
word\linebreak{}
$w(x_{1},...,x_{n})\in\FF_{n}=\langle x_{1},...,x_{n}\rangle$ such
that for any f.g. free group $F$ and $n$-tuples\linebreak{}
$(A_{1},...,A_{n}),(B_{1},...,B_{n})\in F^{n}$, if $w(A_{1},...,A_{n})=w(B_{1},...,B_{n})\neq1$
then there exists some $S\in F$ such that $SA_{i}S^{-1}=B_{i}$ for
all $1\leq i\leq n$.

An \emph{Ivanov word} is a $C$-test word in $n$ letters which is
not a proper power, and with the additional property that for elements
$A_{1},...,A_{n}$ in any free group $F$, $w(A_{1},...,A_{n})=1$
iff $\langle A_{1},...,A_{n}\rangle$ is a cyclic subgroup of $F$.
\end{defn}
\begin{lem}
\emph{(cf. \cite{key-12}, Corollary 1)\label{S of Ivanov}} Let $\varphi,\psi\in End(\mathbb{F}_{n})$
such that $\psi$ is a monomorphism and\linebreak{}
$\varphi(w)=\psi(w)$ for $w$ which is an Ivanov word in $n$ letters.
Then $\varphi=\tau_{S}\circ\psi$ for some $S\in\FF_{n}$ such that
$\langle S,\psi(w)\rangle\leq\mathbb{F}_{n}$ is cyclic, where $\tau_{S}$
is the conjugation by $S$.

If, in addition, $\psi$ is surjective, then $S\in\langle\psi(w)\rangle$.
\end{lem}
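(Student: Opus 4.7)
The plan is to reduce the statement directly to the defining property of the $C$-test word, and then to use the standard fact that in a free group the centraliser of a non-trivial element is cyclic.

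First I would observe that $\psi(w) \neq 1$. Indeed, an Ivanov word is non-trivial by definition, and $\psi$ is a monomorphism, so $\psi(w) = \psi(w(x_1,\ldots,x_n)) \neq 1$. Combined with the hypothesis $\varphi(w) = \psi(w)$, this means $w(\varphi(x_1),\ldots,\varphi(x_n)) = w(\psi(x_1),\ldots,\psi(x_n)) \neq 1$ in $\FF_n$.

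Next, I would apply the $C$-test word property of $w$ to the two $n$-tuples $(A_1,\ldots,A_n) := (\varphi(x_1),\ldots,\varphi(x_n))$ and $(B_1,\ldots,B_n) := (\psi(x_1),\ldots,\psi(x_n))$ in the free group $F := \FF_n$. The property yields an element $S \in \FF_n$ such that $S\psi(x_i)S^{-1} = \varphi(x_i)$ for every $1 \leq i \leq n$ (adjusting $S$ to $S^{-1}$ if necessary to match the convention of $\tau_S$). Since the $x_i$ generate $\FF_n$, this gives $\varphi = \tau_S \circ \psi$ as required.

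For the cyclicity claim, I would substitute $w$ into the relation just obtained: $\psi(w) = \varphi(w) = \tau_S(\psi(w)) = S\psi(w)S^{-1}$, so $S$ commutes with $\psi(w)$. Because $\psi(w)$ is non-trivial and its centraliser in the free group $\FF_n$ is a maximal cyclic subgroup, $\langle S, \psi(w)\rangle$ is contained in that cyclic subgroup and is therefore itself cyclic.

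Finally, for the addendum: if $\psi$ is also surjective then it is an automorphism of $\FF_n$ by the Hopf property of finitely generated free groups, so $\psi(w)$ is not a proper power in $\FF_n$ (being the image of the Ivanov word $w$, which by definition is not a proper power, under an automorphism). Consequently $\psi(w)$ generates its own centraliser, and since $S$ lies in that centraliser we conclude $S \in \langle \psi(w)\rangle$. The only slightly delicate point in the argument is tracking the direction of conjugation in the statement of the $C$-test word property versus the convention for $\tau_S$; otherwise everything is immediate from Ivanov's definition and the structure of centralisers in free groups.
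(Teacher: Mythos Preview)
Your argument for the main claim is essentially identical to the paper's: both apply the $C$-test word property to the tuples $(\varphi(x_i))$ and $(\psi(x_i))$ to produce $S$, and then observe that $S$ commutes with $\psi(w)\neq 1$, whence $\langle S,\psi(w)\rangle$ is cyclic.

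For the addendum when $\psi$ is surjective, you take a more direct route than the paper. The paper writes $S=c^{p}$ and $\psi(w)=c^{q}$ for a generator $c$ of the common cyclic subgroup, uses B\'ezout to show $c\in\mathrm{Im}(\psi)$, and then pulls $c$ back via $\psi^{-1}$ to exhibit a root of $w$, forcing $q=\pm 1$. You instead invoke the Hopf property to upgrade $\psi$ to an automorphism and note that automorphisms preserve the property of not being a proper power, so $\psi(w)$ already generates its own centraliser and $S\in\langle\psi(w)\rangle$ follows immediately. Your version is shorter and avoids the arithmetic detour; the paper's version is more hands-on but arrives at the same conclusion that $\psi(w)$ is primitive in the cyclic centraliser. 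Both are correct.
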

\begin{proof}
Let $w(x_{1},...,x_{n})$ be an Ivanov word in $n$ letters. Denote
$A_{i}=\psi(x_{i})$, $B_{i}=\pp(x_{i})$ for\linebreak{}
$1\leq i\leq n$. Then $\pp(w(x_{1},...,x_{n}))=w(\pp(x_{1}),...,\pp(x_{n}))=w(B_{1},...,B_{n})$,
and likewise\linebreak{}
$\psi(w(x_{1},...,x_{n}))=w(A_{1},...,A_{n})$. Since $\psi(w(x_{1},...,x_{n}))=\pp(w(x_{1},...,x_{n}))$,
it follows that\linebreak{}
$w(A_{1},...,A_{n})=w(B_{1},...,B_{n})$, and since $\psi$ is a monomorphism
and $w(x_{1},...,x_{n})\neq1_{\mathbb{F}_{n}}$, it follows also that
$w(A_{1},...,A_{n})\neq1_{\FF_{n}}$. Consequently, there exists some
$S\in\FF_{n}$ such that $SA_{i}S^{-1}=B_{i}$ for all $1\leq i\leq n$
(because $w$ is a C-test word). This can also be written as $\tau_{S}\circ\psi(x_{i})=\pp(x_{i})$
for all $1\leq i\leq n$. $(x_{1},...,x_{n})$ is a generating set
of $\FF_{n}$, so in fact $\tau_{S}\circ\psi=\pp$.

In particular, $Sw(A_{1},...,A_{n})S^{-1}=w(B_{1},...,B_{n})=w(A_{1},...,A_{n})$.
In other words, $w(A_{1},...,A_{n})$ and $S$ commute in $\FF_{n}$.
This is only possible if there exists a cyclic subgroup $\langle c\rangle\leq\FF_{n}$
to which both $w(A_{1},...,A_{n})$ and $S$ belong.

Now assume in addition that $\psi$ is surjective, then $S\in Im(\psi)$.
Let $p,q\in\mathbb{Z}$ s.t. $S=c^{p}$ and $\psi(w)=c^{q}$. WLOG
assume $gcd(p,q)=1$ (else take $c^{gcd(p,q)}$ instead of $c$).
By Bézout's lemma, there exist $k,\ell\in\mathbb{Z}$ with $p\cdot k+q\cdot\ell=1$.
It follows that $c=c^{p\cdot k+q\cdot\ell}=c^{p\cdot k}\cdot c^{q\cdot\ell}=S^{k}\cdot(\psi(w))^{\ell}\in Im(\psi)$.
Since $\psi$ is a monomorphism, there exists a unique element $v=\psi^{-1}(c)$.
This element is in fact a root of $w(x_{1},x_{2})$: $\psi(v^{q})=c^{q}=\psi(w(x_{1},x_{2}))$,
but $\psi$ is a monomorphism, so $v^{p}=w(x_{1},x_{2})$.

\end{proof}
Notice that in the lemma above, if $\psi$ is surjective then $S=\psi(w)^{k}$
for some $k\in\mathbb{Z}$, so $\varphi$ can be written as $\varphi=\tau_{\psi(w)}^{k}\circ\psi=\psi\circ\tau_{w}^{k}$.
\begin{lem}
\emph{\cite{key-3,key-12,key-2}}\label{lem:Ivanov word}~%
\noindent\begin{minipage}[t]{1\columnwidth}%
$w(x_{1},x_{2})=[x_{1},x_{2}]^{100}x_{1}[x_{1},x_{2}]^{200}x_{1}[x_{1},x_{2}]^{300}x{}_{1}^{-1}[x_{1},x_{2}]^{400}x_{1}^{-1}\cdot$

\hspace{1.75cm}$\cdot[x_{1},x_{2}]^{500}x_{2}[x_{1},x_{2}]^{600}x_{2}[x_{1},x_{2}]^{700}x_{2}^{-1}[x_{1},x_{2}]^{800}x_{2}^{-1}$ %
\end{minipage}

\hspace{-0.5cm}is an Ivanov word in 2 letters. 

\end{lem}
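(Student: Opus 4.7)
The plan is to verify the three defining properties of an Ivanov word separately: (a) that $w$ is a $C$-test word in two letters; (b) that $w$ is not a proper power; and (c) that for any two elements $A_{1},A_{2}$ in any free group $F$, $w(A_{1},A_{2})=1$ if and only if $\langle A_{1},A_{2}\rangle$ is cyclic. Property (a) is the deep content, and for this I would appeal directly to Ivanov's theorem from \cite{key-3}: the word $w$ is precisely of the shape Ivanov constructs (alternating large powers of the commutator $[x_{1},x_{2}]$ with single occurrences of the generators), and the exponents $100,200,\ldots,800$ are selected to be both large enough and pairwise sufficiently distinct for the small-cancellation / Bass--Serre arguments in \cite{key-3,key-12} to apply verbatim.

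For (b), I would check directly that $w$ is cyclically reduced (no $x_{i}^{\pm 1}[x_{1},x_{2}]^{k}x_{i}^{\mp 1}$ block collapses, since $[x_{1},x_{2}]^{k}\neq 1$ in $\FF_{2}$), and then argue combinatorially: if $w=v^{m}$ with $m\geq 2$, the visible ``commutator-power/generator'' pattern of $w$ consists of eight blocks labelled in order by the pairs
$(100,x_{1}),(200,x_{1}),(300,x_{1}^{-1}),(400,x_{1}^{-1}),(500,x_{2}),(600,x_{2}),(700,x_{2}^{-1}),(800,x_{2}^{-1})$,
and $m$ would have to divide $8$ with the sequence of pairs periodic of period $8/m$. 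Each value of $m\in\{2,4,8\}$ is ruled out by comparing the exponents and the generators across the would-be period (for instance, $m=2$ would force blocks $1$ and $5$ to agree, but they carry exponents $100$ and $500$, and generators $x_{1}$ and $x_{2}$).

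For (c), the easy direction is a one-line calculation: if $\langle A_{1},A_{2}\rangle$ is cyclic then $[A_{1},A_{2}]=1$, every commutator block vanishes, and $w(A_{1},A_{2})$ collapses to $A_{1}A_{1}A_{1}^{-1}A_{1}^{-1}A_{2}A_{2}A_{2}^{-1}A_{2}^{-1}=1$. For the converse, suppose $\langle A_{1},A_{2}\rangle\leq F$ is not cyclic; then by Nielsen--Schreier the subgroup $\langle A_{1},A_{2}\rangle$ is free of rank at most $2$, and being non-cyclic and $2$-generated forces the rank to be exactly $2$, so $A_{1},A_{2}$ form a free basis. Hence the map $\FF_{2}\to\langle A_{1},A_{2}\rangle$ sending $x_{i}\mapsto A_{i}$ is an isomorphism, and $w(A_{1},A_{2})=1$ would force $w(x_{1},x_{2})=1$ in $\FF_{2}$, contradicting the fact (visible from (b) or from the cyclically reduced form) that $w$ is a nontrivial element of $\FF_{2}$.

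The only genuinely hard step is (a), and here I would not attempt to reprove Ivanov's theorem but cite it; the remainder of the proof (b) and (c) is bookkeeping. The main structural point worth underlining is that the ``not a proper power'' and ``cyclicity'' conditions are lightweight additions to Ivanov's $C$-test property, specifically engineered by the asymmetric choice of exponents $100k$ for $k=1,\ldots,8$.
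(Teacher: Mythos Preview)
The paper does not prove this lemma at all: it is stated with citations to \cite{key-3,key-12,key-2} and then used. Your plan is therefore strictly more detailed than the paper's treatment, and is in the same spirit as those references: cite Ivanov for the $C$-test property (a), and verify (b) and (c) by hand.

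Your arguments for (b) and (c) are essentially correct, with two small points worth tightening. In (b), the word as written is not literally reduced: there are a few single-letter cancellations at block boundaries (for instance $x_{1}^{-1}\cdot[x_{1},x_{2}]^{400}$ begins $x_{1}^{-1}x_{1}\cdots$, and similarly at $[x_{1},x_{2}]^{500}x_{2}$, etc.). After these small reductions the eight commutator-power blocks survive with their distinct exponents, the word is cyclically reduced, and your periodicity argument then works; you should also say one word about why any period of the reduced word must respect the block boundaries (the blocks are the maximal subwords lying in the cyclic subgroup $\langle[x_{1},x_{2}]\rangle$, hence are intrinsically determined). In (c), the step ``non-cyclic and $2$-generated forces rank exactly $2$, so $A_{1},A_{2}$ form a free basis'' is correct but not automatic: it uses the Hopfian property of free groups (any $n$-element generating set of $\FF_{n}$ is a basis). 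With these two remarks your sketch is complete.
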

This particular Ivanov word will be extremely useful in the construction
of the group in Section \ref{sec:Counterexample}.

\subsubsection*{Test sequences}
\begin{defn}
Let $G$ be a freely indecomposable limit group with a strict resolution
\linebreak{}
$G=G_{0}\overset{\eta_{1}}{\twoheadrightarrow}G_{1}\overset{\eta_{2}}{\twoheadrightarrow}...\overset{\eta_{k}}{\twoheadrightarrow}G_{k}\overset{\eta_{k+1}}{\twoheadrightarrow}G_{k+1}=\FF_{\ell}$,
and for every $0\leq i\leq k$ let $\Lambda_{i}$ be a graph of groups
decomposition of $G_{i}$. Order the edges of the decomposition $\{\aij eij\}_{j=1}^{q_{i}}$.
Choose a free generating set $(x_{1},...,x_{\ell})$ for $\FF_{\ell}$.
A \emph{test sequence} of the resolution (with respect to $\boldsymbol{x}$)
is a sequence $\{h_{n}\}_{\nn}\subseteq Hom(G,\FF)$ with the following
properties:
\begin{enumerate}
\item There exist $\{\aij hin\}_{\nn}\subseteq Hom(G_{i},\FF)$ and $\aij{\sigma}{i-1}n\subseteq Mod(G_{i-1})$
for all $1\leq i\leq k+1$ such that $\aij hin=\aij h{i+1}n\circ\eta_{i+1}\circ\aij{\sigma}in$
and $h_{n}=\aij h1n\circ\eta_{i+1}\circ\aij{\sigma}0n$.
\item $\underset{i}{max}\abs{\aij h{k+1}n(x_{i})}=\chi_{n}$, $\underset{i}{min}\abs{\aij h{k+1}n(x_{i})}=\xi_{n}$
with $\frac{\xi_{n}}{\chi_{n}}\rightarrow1$, and cancellation between
$h_{n}^{(k+1)}(x_{i})$ and $h_{n}^{(k+1)}(x_{j})$ for $i\neq j$
is at most $c_{n}$ such that $\frac{c_{n}}{\chi_{n}}\rightarrow0$.
\item $\sk\aij hin=1_{\FF}$, and furthermore the graph of groups associated
with the limit tree of $G_{i}/\sk\aij hin$ is the one-edged splitting
obtained from $\Lambda_{i}$ by collapsing all edges but $\aij ei1$.
\item For every $1\leq r<q_{i}$, the connected component of $\Lambda_{i}\backslash\{\aij eij\}_{j=1}^{r}$
containing $\aij ei{r+1}$ has a fundamental group $\aij G{r+1}i$.
$\sk\aij hin\vert_{\aij G{r+1}i}=1$, and furthermore the graph of
groups associated with the limit tree of $\aij G{r+1}i/\sk(\aij hin\vert_{\aij G{r+1}i})$
is the one-edged splitting obtained from the connected component of
$\Lambda_{i}\backslash\{\aij eij\}_{j=1}^{r}$ containing $\aij ei{r+1}$
by collapsing all edges but $\aij ei{r+1}$.
\end{enumerate}
\end{defn}
Test sequences are known to exist for some types of groups, among
them freely indecomposable limit groups when the associated graphs
of groups are those of the JSJ decompositions in which all the vertices
are rigid. Their properties feature heavily in the arguments presented
in Section \ref{sec:examples}.

\section{Counterexample to canonicality of MR diagrams\label{sec:Counterexample}}

The \textbf{construction} of the (restricted) Makanin-Razborov diagram
of a (restricted) limit group $L$ \linebreak{}
depends on the chosen generating set $\boldsymbol{g}$, for it is
with respect to generating sets inherited from $\boldsymbol{g}$ that
the (restricted) maximal shortening quotients are taken. However,
this does not automatically mean that the\textbf{ resulting diagram}
likewise depends on the choice of generating set.

To examine the canonicality of (restricted) MR-diagrams with dependence
only on the limit group, and not also on the generating set, it is
enough to examine the canonicality of the first level, since all other
levels are built iteratively.

For a (restricted) limit group $L$ with two different generating
sets $\boldsymbol{g}=(g_{1},...,g_{t})$ and\linebreak{}
$\boldsymbol{u}=(u_{1},...,u_{r})$, is the (restricted) MR diagram
of $L$ with respect to $\boldsymbol{g}$ the same as the (restricted)
MR diagram of $L$ with respect to $\boldsymbol{u}$, up to isomorphism
of shortening quotients? In the restricted case the answer is negative,
and this section is devoted to constructing a counterexample.

$ $

For a word $1\neq v\in\FF_{k}$ which is not primitive and has no
roots, the group $\FF_{k}\agm{\langle v\rangle}\FF_{k}$ is a limit
group. S. Heil \cite{key-2}  has described all the possible forms
of cyclic JSJ decompositions of a double of free groups of rank 2
(along a word which is not necessarily Ivanov). Some of those forms
can be eliminated when taking such a double along an Ivanov word $w(a,b)$
for some generating set $\{a,b\}$ of $\FF_{2}$. For example, this
eliminates all forms that are possible iff $w(a,b)\in\langle xyx^{-1},y\rangle$
for some (other) generating set $\{x,y\}$ of $\FF_{2}=\langle a,b\rangle$.
Suppose otherwise, then $w(a,b)\in\langle xyx^{-1},y\rangle$ for
some generating set $\{x,y\}$ of $\FF_{2}$. Consequently, there
exists $\pp\in Aut(\FF_{2})$ which is not an inner automorphism but
fixes $y$ and $xyx^{-1}$ (for example, $\pp$ that is defined by
$\pp(x)=xy,\pp(y)=y$). So this $\pp$ fixes the group generated by
$y$ and $xyx^{-1}$. In particular, $\pp$ fixes $w(a,b)$, so $w(a,b)=\pp\left(w(a,b)\right)=w\left(\pp(a),\pp(b)\right)$.
Since $w$ is an Ivanov word, it follows that $\pp(b)=SbS^{-1}$ and
$\pp(a)=SaS^{-1}$. But this means that $\pp$ is an inner automorphism,
a contradiction. By similar argument, it can be shown that $w$ does
not correspond to a simple closed curve in a surface whose surface
group appears in the JSJ decomposition (because such words can be
fixed by non-inner automorphisms, whereas Ivanov words cannot). The
three remaining cyclic JSJ forms, described in figure \ref{fig:essential JSJ forms},
all share the same essential JSJ form, which coincides with the double
decomposition.

\begin{figure}[tbh]
	\centering
	\includegraphics[width=0.7\linewidth]{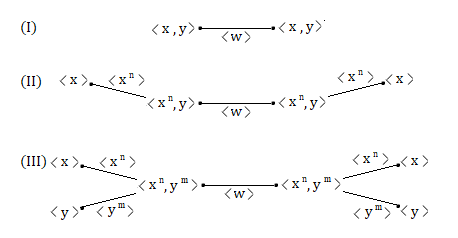}
	\caption{\label{fig:essential JSJ forms}Possible JSJ forms of the double of $\protect\FF_{2}$ along an Ivanov word $w$}
	\label{fig:essential JSJ forms}
\end{figure}

$ $

Let $w(y_{1},y_{2})$ be the Ivanov word given in Lemma \ref{lem:Ivanov word},
and let\linebreak{}
$G_{w}(a_{1},a_{2},b_{1},b_{2})=F(a_{1},a_{2})\agm{w(a_{1},a_{2})=w(b_{1},b_{2})}F(b_{1},b_{2})$
be the double of $\FF_{2}$ over that word. Take the graph of groups
associated with the double $D_{w}=G_{w}(a_{1},a_{2},b_{1},b_{2})\underset{w(b_{1},b_{2})=w(c_{1},c_{2})}{*}G_{w}(d_{1},d_{2},c_{1},c_{2})$,
and add an edge between the two vertices. Let the edge group of the
new edge be the group generated by the element $z=x_{1}y_{2}x_{1}y_{1}x_{1}y_{1}$
in $G_{w}(x_{1},x_{2},y_{1},y_{2})$.

The resulting double-edged double will be denoted $D_{w,z}$ (see
figure \ref{fig:D_w,z}). Notice that by adding the second edge, the
underlying graph of the graph of groups is no longer a tree. Therefore,
$z$ can be embedded by the identity function into only one of the
vertex groups, whereas the embedding into the other vertex group must
be by conjugation with a Bass-Serre element $\gamma$. This gives
the equation $a_{1}b_{2}a_{1}b_{1}a_{1}b_{1}=z(a_{1},a_{2},b_{1},b_{2})=\gamma z(d_{1},d_{2},c_{1},c_{2})\gamma^{-1}=\gamma d_{1}c_{2}d_{1}c_{1}d_{1}c_{1}\gamma^{-1}$.

\begin{figure}[tbh]
	\centering
	\includegraphics[width=0.7\linewidth]{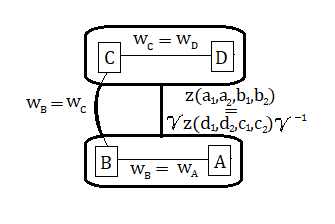}
	\caption{\label{fig:D_w,z}The group $D_{w,z}$. The bold lines represent the essential JSJ decomposition.}
	\label{fig:D_w,z}
\end{figure}

For simplicity, assume the notation $A=F(a_{1},a_{2}),B=F(b_{1},b_{2}),C=F(c_{1},c_{2}),D=F(d_{1},d_{2})$.
\begin{lem}
$D_{w,z}$ is a restricted limit group with respect to the coefficients
$\{b_{1},b_{2}\}$.
\end{lem}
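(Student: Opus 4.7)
The plan is to exhibit an explicit sequence $\{h_n\}_{n\in\mathbb{N}} \subseteq Hom_B(D_{w,z}, \FF_2)$ of restricted homomorphisms, i.e.\ satisfying $h_n(b_i) = y_i$, whose stable kernel $\sk h_n$ is trivial in $D_{w,z}$, so that $D_{w,z} = D_{w,z}/\sk h_n$ is realised as the desired restricted limit group. The sequence will be obtained by stretching $D_{w,z}$ by large compositions of Dehn twists along each edge of its graph of groups and then retracting to $\FF_2$.

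First I would define the retraction $\rho: D_{w,z} \twoheadrightarrow \FF_2 = \langle y_1, y_2 \rangle$ by
\[
\rho(b_i) = y_i, \qquad \rho(a_i) = \rho(c_i) = \rho(d_i) = y_i, \qquad \rho(\gamma) = 1.
\]
This is well-defined: the amalgamation relations $w(a_1,a_2) = w(b_1,b_2) = w(c_1,c_2) = w(d_1,d_2)$ collapse to the tautology $w(y_1,y_2) = w(y_1,y_2)$, and the HNN relation $\gamma\, z(d_1,d_2,c_1,c_2)\,\gamma^{-1} = z(a_1,a_2,b_1,b_2)$ becomes $z(y_1,y_2,y_1,y_2) = z(y_1,y_2,y_1,y_2)$. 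Next I would identify four Dehn twists in $RMod(D_{w,z})$: twists $\tau_1, \tau_2, \tau_3$ along the three amalgamation edges labelled $\langle w\rangle$, each conjugating one adjacent vertex side by a power of $w(b_1,b_2)$ while fixing the other side and in particular fixing $B$ pointwise; and a twist $\tau_z$ along the HNN edge, sending $\gamma \mapsto \gamma\cdot z(d_1,d_2,c_1,c_2)^n$ and fixing all vertex generators. Setting
\[
h_n \;=\; \rho \circ \tau_1^{n_1} \circ \tau_2^{n_2} \circ \tau_3^{n_3} \circ \tau_z^{n_z},
\]
with exponents $n_1, n_2, n_3, n_z \to \infty$ chosen in a fast-growing hierarchy, yields a restricted homomorphism at every step, since each $\tau_i$ fixes $B$ pointwise.

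The core technical step is to verify that $\sk h_n = \{1\}$. Given a non-trivial $g \in D_{w,z}$, write $g$ in reduced Bass-Serre normal form in the graph of groups of $D_{w,z}$. The composed twist $T_n = \tau_1^{n_1}\circ\cdots\circ\tau_z^{n_z}$ inserts a long conjugating power of $w(b_1,b_2)$ at every syllable transition across an amalgamation edge and a long power of $z(d,c)$ next to each occurrence of $\gamma^{\pm1}$; applying $\rho$ afterwards turns the vertex syllables into words in $\FF_2$ and the inserted twists into $w(y_1,y_2)^k$ and $z(y_1,y_2,y_1,y_2)^\ell$. Since $w(y_1,y_2)$ is non-trivial and not a proper power (by the Ivanov property of $w$) and $z(y_1,y_2,y_1,y_2) = y_1 y_2 y_1^4$ is manifestly non-trivial and not a proper power in $\FF_2$, choosing the hierarchy $n_z \ll n_3 \ll n_2 \ll n_1$ ensures that the dominant twist powers cannot be absorbed by cancellation with any shorter syllables, so $\rho(T_n(g)) \neq 1$ for all sufficiently large $n$.

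The main obstacle is the HNN piece: one must check that $z(a_1,a_2,b_1,b_2) \in G_w(a,b)$ and $z(d_1,d_2,c_1,c_2) \in G_w(d,c)$ each generate maximal, malnormal cyclic subgroups in the respective doubles, so that $\tau_z$ is a genuine modular automorphism and the HNN normal form is preserved under $T_n$. This reduces to verifying that $z$ acts hyperbolically on the Bass-Serre tree of $G_w$ (immediate from its syllable structure $a_1 b_2 a_1 b_1 a_1 b_1$, which crosses the amalgamation edge repeatedly) and that $z$ is not conjugate into a power of $w$ in $G_w$, which follows from comparing translation lengths and from the contrasting syllable pattern of $w$ in Lemma \ref{lem:Ivanov word}.
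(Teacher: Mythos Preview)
Your approach of building an explicit faithful sequence $\{h_n\}\subseteq Hom_B(D_{w,z},\FF_2)$ is different from the paper's: the paper instead invokes the (restricted form of the) criterion from \cite{key-7}, Theorem~5.12, that a f.g.\ group admitting a restricted strict MR resolution to $\FF_2$ is a restricted limit group, exhibits the resolution $D_{w,z}\overset{\eta}{\to}G_w\overset{\pi}{\to}\FF_2$ (with $\eta(c_i)=b_i$, $\eta(d_i)=a_i$, $\eta(\gamma)=1$, $\pi(a_i)=\pi(b_i)=b_i$), and verifies the strictness conditions directly. Your route is more hands-on and avoids that citation, but as written it contains a real gap.

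The gap is the assertion that $D_{w,z}$ carries a graph-of-groups decomposition with \emph{three} $\langle w\rangle$-edges and that the corresponding Dehn twists $\tau_1,\tau_2,\tau_3$ lie in $RMod(D_{w,z})$. No such decomposition exists: since $z(a,b)=a_1b_2a_1b_1a_1b_1$ is hyperbolic in the Bass--Serre tree of $G_w(a,b)=A*_{\langle w\rangle}B$, the $z$-edge is incompatible with any refinement of the $G_w(a,b)$ vertex, and the only splitting of $D_{w,z}$ is the two-vertex, two-edge essential JSJ (one $w$-edge between the two copies of $G_w$, plus the $z$-edge). Concretely, your $\tau_1$ (say $a_i\mapsto wa_iw^{-1}$, fixing $B,C,D,\gamma$) is not even a homomorphism of $D_{w,z}$: it sends the left side of $z(a,b)=\gamma z(d,c)\gamma^{-1}$ to $wa_1w^{-1}b_2wa_1w^{-1}b_1wa_1w^{-1}b_1$ while leaving the right side unchanged. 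As Lemma~\ref{lem:Hom form} and Remark~\ref{rem:k_n const} make explicit, $RMod(D_{w,z})$ has only two twists (along the single $w$-edge and along $z$), and every restricted $h$ is parametrised by integers $(k,\ell,q)$ with the ``$A$-exponent'' $k$ \emph{not} movable by $RMod(D_{w,z})$; in particular any map of the form $\rho\circ(\text{twists in }RMod(D_{w,z}))$ has $k=0$, hence kills $a_ib_i^{-1}$ and can never be faithful. The repair is to build $h_n$ in two stages---twist in $RMod(D_{w,z})$, retract to $G_w$, then twist in $Mod(G_w)$ (where the $A$--$B$ edge \emph{is} a legitimate splitting), then retract to $\FF_2$---which is precisely the test-sequence construction for the resolution $D_{w,z}\to G_w\to\FF_2$. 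Once corrected in this way, your argument is an explicit unrolling of the theorem the paper invokes.
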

\begin{proof}
To show that $D_{w,z}$ is a restricted limit group, it is enough
to find a restricted strict MR resolution from $D_{w,z}$ to $\FF_{2}=\langle b_{1},b_{2}\rangle$.
(This is due to the modification of Theorem 5.12 in \cite{key-7}
to the restricted case. See Definition 5.11 there of a strict MR resolution
of a f.g. group which is not necessarily a limit group.) The suggested
resolution is $D_{w,z}\overset{\eta}{\rightarrow}G_{w}\overset{\pi}{\rightarrow}\FF_{2}$,
where $\eta(\gamma)=1$, $\eta(c_{i})=\eta(b_{i})=b_{i}$, $\eta(d_{i})=\eta(a_{i})=a_{i}$
and $\pi(a_{i})=\pi(b_{i})=b_{i}$ for $i\in\{1,2\}$. It is clearly
restricted with respect to the coefficients $\{b_{1},b_{2}\}$. It
remains to check that it is indeed a strict MR resolution. There are
four criteria in \cite{key-7} 5.11, examine each in turn.

    \renewcommand{\labelenumi}{(\roman{enumi})}
\begin{enumerate}
\item  BWOC assume that $\langle z(a_{1},a_{2},b_{1},b_{2})\rangle$ is
not maximal abelian in $G_{w}$, then $z(a_{1},a_{2},b_{1},b_{2})=x^{j}$
for some $x\in G_{w}$ and $j\in\mathbb{Z}\backslash\{\pm1\}$. Then
$b_{1}b_{2}b_{1}^{4}=\eta(z(a_{1},a_{2},b_{1},b_{2}))=y^{j}$ for\linebreak{}
$\eta(x)=y\in\FF_{2}=\langle b_{1},b_{2}\rangle$.  This is not possible,
as $b_{1}b_{2}b_{1}^{4}$ clearly has no roots in $\FF_{2}=\langle b_{1},b_{2}\rangle$.
A similar argument can be used for $w$, bearing in mind that $w(b_{1},b_{2})$
is known to have no roots in $\FF_{2}=\langle b_{1},b_{2}\rangle$.
\item $\eta$ is indeed monomorphic on $\langle A\agm{\langle w\rangle}B,\,w,\,z(a_{1},a_{2},b_{1},b_{2})\rangle$
and on $\langle C\agm{\langle w\rangle}D,\,w,\,z(d_{1},d_{2},c_{1},c_{2})\rangle$,
and $\pi$ is monomorphic on $\langle A,\,w\rangle$ and on $\langle B,\,w\rangle$.
\end{enumerate}
There are no QH vertices, nor abelian vertices, in the given splittings,
so criteria (iii) and (iv) hold vacuously.

Hence the suggested resolution is indeed a restricted strict MR resolution
of $D_{w,z}$.

\end{proof}
\begin{lem}
The double-edged double decomposition

\hspace{-0.5cm}$D_{w,z}=G_{w}(a_{1},a_{2},b_{1},b_{2})\stackrel[z(a_{1},a_{2},b_{1},b_{2})=\gamma^{-1}z(d_{1},d_{2},c_{1},c_{2})\gamma]{w(b_{1},b_{2})=w(c_{1},c_{2})}{*}G_{w}(d_{1},d_{2},c_{1},c_{2})$
is also the essential JSJ decomposition of $D_{w,z}$.
\end{lem}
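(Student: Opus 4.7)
The plan is to verify the two defining properties of an essential JSJ decomposition for the given double-edged graph of groups: first, that every edge group is maximal cyclic in $D_{w,z}$ (so the splitting is an essential $\mathbb{Z}$-splitting), and second, that the vertex groups are rigid in any essential refinement, so that no further essential cyclic splitting is compatible with this one.

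For the first part, I would argue as follows. The previous lemma already establishes that $\langle w(b_1,b_2)\rangle$ and $\langle z(a_1,a_2,b_1,b_2)\rangle$ are maximal cyclic inside the vertex group $G_{w}(a_1,a_2,b_1,b_2)$: for $w$ this is the Ivanov-word property (Lemma \ref{lem:Ivanov word}), and for $z$ the argument using $b_1b_2b_1^4$ being rootless in $\FF_2$. To pass from maximality in the vertex group to maximality in $D_{w,z}$, I would use the action on the Bass--Serre tree of the double-edged decomposition: any $u\in D_{w,z}$ with $u^n\in\langle w\rangle$ (resp.\ $\langle z\rangle$) for $n\geq 1$ acts elliptically on this tree, hence is conjugate into a vertex group $G_w$, where maximality forces $n=\pm 1$.

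For the second part, I would combine Heil's classification with the Ivanov argument already carried out before Figure \ref{fig:essential JSJ forms}. That discussion shows that the essential JSJ of each vertex group $G_w$ is precisely the double decomposition $\FF_2\agm{\langle w\rangle}\FF_2$; no surface or interval-exchange component arises, and no alternative cyclic splitting is possible, because any non-inner automorphism of $\FF_2$ fixing $w$ is ruled out by Lemma \ref{S of Ivanov} and the Ivanov property. Now the key observation is that the element $z(a_1,a_2,b_1,b_2)=a_1b_2a_1b_1a_1b_1$ is a genuinely mixed word in $G_w$, not conjugate into either free factor $F(a_1,a_2)$ or $F(b_1,b_2)$; hence any refinement of our two-vertex decomposition that split $G_w$ along its own essential JSJ would make $\langle z\rangle$ hyperbolic, contradicting its role as an edge group of the ambient splitting. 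Therefore the vertex groups $G_w$ are rigid in every essential refinement of the double-edged decomposition, and no proper essential refinement exists.

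Finally, to conclude that our decomposition is \emph{the} essential JSJ (not merely a maximal essential splitting), I would use the uniqueness of the essential JSJ deformation space: any other essential $\mathbb{Z}$-splitting of $D_{w,z}$ admits a common refinement with ours, to which Step 2 applies. The main obstacle here is ruling out exotic essential splittings in which $w$ or $z$ becomes hyperbolic; this is precisely where the Ivanov-word machinery is indispensable, since any such splitting would induce a non-inner automorphism of $\FF_2$ fixing $w$ up to conjugacy, contradicting the conclusion of Lemma \ref{S of Ivanov}. Once that possibility is excluded, the matching of our decomposition with the essential JSJ follows from the standard uniqueness theorem cited in \cite{key-10,key-11}.
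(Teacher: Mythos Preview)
Your proposal is correct and the core of your argument coincides with the paper's: the only essential splitting of each vertex group $G_w$ is its double decomposition $\FF_2\agm{\langle w\rangle}\FF_2$ (by Heil's classification together with the Ivanov-word discussion preceding Figure~\ref{fig:essential JSJ forms}), and the edge element $z=a_1b_2a_1b_1a_1b_1$ is hyperbolic in that splitting, so no compatible essential refinement of the double-edged decomposition exists. The paper's proof consists of exactly this observation, stated in two sentences.

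Your Steps~1 and~3 go beyond what the paper writes. Step~1 (maximality of $\langle w\rangle$ and $\langle z\rangle$ in $D_{w,z}$) is reasonable extra hygiene; the paper leaves this implicit, having checked the relevant rootlessness in the preceding lemma. Step~3, however, is where you should be a bit careful: your sketch that an exotic essential splitting ``would induce a non-inner automorphism of $\FF_2$ fixing $w$ up to conjugacy'' is not quite the right mechanism, and as written it is more heuristic than proof. The paper sidesteps this entirely by appealing to the canonical nature of the essential JSJ in the sense of \cite{key-10,key-11}: once one knows the vertex groups admit no compatible essential splitting, the given decomposition \emph{is} the essential JSJ by that theory, and no separate uniqueness argument is needed. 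If you want to keep Step~3, it would be cleaner to simply invoke that canonicality rather than attempt an ad hoc exclusion of hyperbolic-$w$ splittings.
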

\begin{proof}
The double decomposition of $G_{w}=F(a_{1},a_{2})\underset{w(a_{1},a_{2})=w(b_{1},b_{2})}{*}F(b_{1},b_{2})$
is also the essential JSJ decomposition of $G_{w}$. The double $D_{w}=G_{w}(a_{1},a_{2},b_{1},b_{2})\underset{w(b_{1},b_{2})=w(c_{1},c_{2})}{*}G_{w}(d_{1},d_{2},c_{1},c_{2})$
is a limit group (as seen in the previous lemma), but the double decomposition
of $D_{w}$ is not its essential JSJ decomposition, as both vertex
groups can be further split with respect to the edge group. However,
the double-edged double decomposition 

\hspace{-0.5cm}$D_{w,z}=G_{w}(a_{1},a_{2},b_{1},b_{2})\stackrel[z(a_{1},a_{2},b_{1},b_{2})=\gamma^{-1}z(d_{1},d_{2},c_{1},c_{2})\gamma]{w(b_{1},b_{2})=w(c_{1},c_{2})}{*}G_{w}(d_{1},d_{2},c_{1},c_{2})$
is the essential JSJ decomposition of $D_{w,z}$. This is due to the
fact that the vertex groups are both $G_{w}$, whose only possible
essential splitting is the double decomposition. But this splitting
is not compatible with the incident edges.
\end{proof}
\begin{lem}
\label{lem:Hom form}Let $h\in Hom_{B}(D_{w,z},B)$, then $h$ is
of the form $h\vert_{B}=id_{B}$, $h\vert_{A}=(\tau_{w_{B}})^{k}\circ\{a_{i}\mapsto b_{i}\}$,
$h\vert_{C}=(\tau_{w_{B}})^{\ell}\circ\{c_{i}\mapsto b_{i}\}$, $h\vert_{D}=(\tau_{w_{B}})^{k+\ell}\circ\{d_{i}\mapsto b_{i}\}$
and $h(\gamma)=h(z^{q})w^{-\ell}$ for some $k,\ell,q\in\mathbb{Z}$,
where $\tau_{w_{B}}$ is the conjugation by $w_{B}$.
\end{lem}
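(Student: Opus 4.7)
The plan is to extract the constraints on $h$ from the defining relations of $D_{w,z}$ in turn. First, $h\vert_{B}=id_{B}$ is immediate from the restricted assumption. For each $X\in\{A,C,D\}$, write $\psi_{X}:X\xrightarrow{\sim}B$ for the tautological isomorphism sending the subscripted generator to $b_{i}$, and set $\bar{h}_{X}:=h\vert_{X}\circ\psi_{X}^{-1}\in End(B)$. The amalgamation relations $w(a)=w(b)$, $w(b)=w(c)$, $w(d)=w(c)$ together with $h\vert_{B}=id_{B}$ force $\bar{h}_{X}(w_{B})=w_{B}$ for $X=A,C,D$, where $w_{B}:=w(b_{1},b_{2})$. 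Applying Lemma \ref{S of Ivanov} with $\psi=id_{B}$, which is a surjective monomorphism, yields integers $k,\ell,m\in\mathbb{Z}$ with $\bar{h}_{A}=\tau_{w_{B}}^{k}$, $\bar{h}_{C}=\tau_{w_{B}}^{\ell}$, and $\bar{h}_{D}=\tau_{w_{B}}^{m}$. This already gives the stated forms of $h\vert_{A}$, $h\vert_{C}$, and $h\vert_{D}$, except that the exponent $m$ is not yet identified with $k+\ell$.

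The remaining constraints come from the second edge relation $z(a_{1},a_{2},b_{1},b_{2})=\gamma\,z(d_{1},d_{2},c_{1},c_{2})\,\gamma^{-1}$. Set $Z_{j}:=z\!\left(w_{B}^{j}b_{1}w_{B}^{-j},\,w_{B}^{j}b_{2}w_{B}^{-j},\,b_{1},\,b_{2}\right)\in B$. A direct expansion gives $h(z(a_{1},a_{2},b_{1},b_{2}))=Z_{k}$; collecting the $w_{B}^{\ell}$ factors common to $h\vert_{C}$ and $h\vert_{D}$ gives $h(z(d_{1},d_{2},c_{1},c_{2}))=w_{B}^{\ell}Z_{m-\ell}w_{B}^{-\ell}$. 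Setting $s:=h(\gamma)w_{B}^{\ell}$, the relation reduces to $Z_{k}=s\,Z_{m-\ell}\,s^{-1}$, so $Z_{k}$ and $Z_{m-\ell}$ are conjugate in $B$.

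The main obstacle is the non-conjugacy statement: $Z_{j}$ and $Z_{j'}$ are conjugate in $B$ only when $j=j'$. Since the abelianisation $[Z_{j}]_{ab}=5[b_{1}]+[b_{2}]$ is independent of $j$, one must instead use cyclic length in $B$. From Lemma \ref{lem:Ivanov word} one reads off that $w_{B}$ begins with $b_{1}b_{2}$ and ends with $b_{2}^{-1}b_{2}^{-1}$. Checking cancellations at each of the eleven junctions of the expansion $Z_{j}=w_{B}^{j}b_{1}w_{B}^{-j}b_{2}w_{B}^{j}b_{1}w_{B}^{-j}b_{1}w_{B}^{j}b_{1}w_{B}^{-j}b_{1}$, one finds that the reductions are short (with a single-step cascade for $j<0$), giving cyclically reduced length $6j\vert w_{B}\vert+2$ for $j>0$, equal to $6$ for $j=0$, and $6\vert j\vert\cdot\vert w_{B}\vert-4$ for $j<0$. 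Since $\vert w_{B}\vert$ is large, these values are pairwise distinct, yielding the non-conjugacy and hence $m-\ell=k$, i.e.\ $m=k+\ell$.

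With $m=k+\ell$, the conjugacy relation reduces to $Z_{k}=sZ_{k}s^{-1}$, placing $s\in C_{B}(Z_{k})$. Because $(5,1)$ is primitive in $\mathbb{Z}^{2}$, $Z_{k}$ is root-free in the free group $B$, so $C_{B}(Z_{k})=\langle Z_{k}\rangle$. Hence $s=Z_{k}^{q}=h(z(a_{1},a_{2},b_{1},b_{2}))^{q}$ for some $q\in\mathbb{Z}$, which rearranges to $h(\gamma)=h(z^{q})w_{B}^{-\ell}$, as required.
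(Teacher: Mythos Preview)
Your proof is correct and follows the same two-step strategy as the paper: apply Lemma~\ref{S of Ivanov} to determine $h$ on the four vertex groups up to integer powers of $\tau_{w_B}$, then use the $z$-edge relation to fix the remaining parameters. The differences are only in execution. For the first step, you invoke Lemma~\ref{S of Ivanov} with $\psi=id_{B}$ directly, which is slicker than the paper's route of first showing $h(A),h(C),h(D)\cong\FF_{2}$ via Hopficity before applying the lemma. For the second step, the paper conjugates both sides of the $z$-equation by explicit powers of $b_{1}$ (the $\varepsilon$-trick) to reach cyclically reduced forms and then asserts that the cyclic-permutation constraint forces the exponents to coincide; you instead compute the cyclically reduced lengths of the $Z_{j}$ and use their pairwise distinctness, which is the same invariant made explicit. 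Both finish by identifying the centraliser of the root-free element $Z_{k}=h(z)$ in $B$; your primitivity-of-$(5,1)$ justification for root-freeness spells out what the paper leaves implicit.
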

\begin{proof}
Let $h\in Hom_{B}(D_{w,z},B)$, so in particular $h(w)\neq1$. $h(A)$
is a 2-generated subgroup of $\mathbb{F}_{2}$, and as such $h(A)\in\left\{ \{1\},\mathbb{Z},\mathbb{F}_{2}\right\} $.
Likewise $h(C),h(D)\in\left\{ \{1\},\mathbb{Z},\mathbb{F}_{2}\right\} $.
But if $h(A)$ is cyclic then $h(w)=1$, a contradiction. Therefore
$h(A)\cong\mathbb{\mathbb{F}}_{2}$, and by similar argument $h(C)\cong\mathbb{\mathbb{F}}_{2}\cong h(D)$
(and by assumption $h(B)=B$). 

$h\vert_{A},h\vert_{B},h\vert_{C},h\vert_{D}\in End(\mathbb{F}_{2})$
all agree on the word $w$ in their respective generating sets, and
are all monomorphic (by the Hopf property for f.g. free groups, since
they are all epimorphisms from $\FF_{2}$ to itself). Because $w$
is an Ivanov word, it follows that 

\hspace{-0.5cm}\textbf{$h\vert_{A}=h\vert_{B}\circ(\tau_{w_{B}})^{k}\circ\{a_{i}\mapsto b_{i}\}=(\tau_{w_{B}})^{k}\circ\{a_{i}\mapsto b_{i}\}$},\textbf{
$h\vert_{C}=h\vert_{B}\circ(\tau_{w_{B}})^{\ell}\circ\{c_{i}\mapsto b_{i}\}=(\tau_{w_{B}})^{\ell}\circ\{c_{i}\mapsto b_{i}\}$}

\hspace{-0.5cm}and\textbf{ $h\vert_{D}=h\vert_{C}\circ(\tau_{w_{C}})^{m}\circ\{d_{i}\mapsto c_{i}\}=(\tau_{w_{B}})^{\ell+m}\circ\{d_{i}\mapsto b_{i}\}$}
for some $k,\ell,m\in\mathbb{Z}$.  The choice $a_{1}b_{2}a_{1}b_{1}a_{1}b_{1}=z=\gamma d_{1}c_{2}d_{1}c_{1}d_{1}c_{1}\gamma^{-1}$
gives rise to the equation

\hspace{-0.5cm}$w^{k}b_{1}w^{-k}b_{2}w^{k}b_{1}w^{-k}b_{1}w^{k}b_{1}w^{-k}b_{1}=h(a_{1}b_{2}a_{1}b_{1}a_{1}b_{1})=h(\gamma d_{1}c_{2}d_{1}c_{1}d_{1}c_{1}\gamma^{-1})=$

$=h(\gamma)w^{\ell+m}b_{1}w^{-(\ell+m)}w^{\ell}b_{2}w^{-\ell}w^{\ell+m}b_{1}w^{-(\ell+m)}w^{\ell}b_{1}w^{-\ell}w^{\ell+m}b_{1}w^{-(\ell+m)}w^{\ell}b_{1}w^{-\ell}(h(\gamma))^{-1}=$

$=\left(h(\gamma)w^{\ell}\right)w^{m}b_{1}w^{-m}b_{2}w^{m}b_{1}w^{-m}b_{1}w^{m}b_{1}w^{-m}b_{1}\left(w^{-\ell}(h(\gamma))^{-1}\right)$.

\hspace{-0.5cm}By taking $\varepsilon(r)=\begin{cases}
\begin{array}{c}
1\\
0
\end{array} & \begin{array}{c}
0<r\\
r\leq0
\end{array}\end{cases}$ , the equation

\hspace{-0.5cm}$\left(b_{1}\right)^{-7\varepsilon(k)}w^{k}b_{1}w^{-k}b_{2}w^{k}b_{1}w^{-k}b_{1}w^{k}b_{1}w^{-k}b_{1}\left(b_{1}\right)^{7\varepsilon(k)}=$

$=\left(\left(b_{1}\right)^{-7\varepsilon(k)}h(\gamma)w^{\ell}\left(b_{1}\right)^{7\varepsilon(m)}\right)\left(b_{1}\right)^{-7\varepsilon(m)}w^{m}b_{1}w^{-m}b_{2}w^{m}b_{1}w^{-m}b_{1}w^{m}b_{1}w^{-m}b_{1}\left(b_{1}\right)^{7\varepsilon(m)}\cdot$

$\,\,\cdot\left(\left(b_{1}\right)^{-7\varepsilon(m)}w^{-\ell}(h(\gamma))^{-1}\left(b_{1}\right)^{7\varepsilon(k)}\right)$

\hspace{-0.5cm} is between a cyclically reduced word and a conjugation
of a cyclically reduced word in $\FF_{2}$. The only solutions are
when the first cyclically reduced word is equal to some cyclic permutation
of the second cyclically reduced word. This ensures that $m=k$; in
addition the conjugating element must commute with the cyclically
reduced word, hence $h(\gamma)\in\langle h(z)\rangle w^{-\ell}$.
\end{proof}
\begin{rem}
\label{rem:k_n const}Since $D_{w,z}$ has no free decomposition,
the restriction with respect to the set of coefficients $\{b_{1},b_{2}\}$
ensures that the shortening involves neither left-composition with
elements of $Inn(\FF_{2})$, nor right-composition with elements of
$Mod(D_{w,z})$ that do not fix $\{b_{1},b_{2}\}$. Thus, the only
way to shorten $h\in Hom_{B}(D_{w,z},\mathbb{F}_{2})$ is by right-composition
with some power of the Dehn twist along $z$, which affects the value
of $q$, or along $w$, which affects the value of $\ell$ (but not
of $k$).
\end{rem}
\begin{lem}
Let $\boldsymbol{g}$ be a generating set of $D_{w,z}$. Let $\{h_{n}\}_{\nn}\subseteq Hom_{B}(D_{w,z},\mathbb{F}_{2})$
be a sequence of $\boldsymbol{g}$-shortest morphisms, with $k_{n},\ell_{n},q_{n}\in\mathbb{Z}$
as in the previous lemma for each $h_{n}$. If the associated restricted
$\boldsymbol{g}$-shortening quotient  is strict, then $\abs{k_{n}}\underset{n\rightarrow\infty}{\rightarrow}\infty$.
\end{lem}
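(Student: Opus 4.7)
The plan is to argue by contradiction. Suppose $\{|k_n|\}$ is bounded; since $k_n \in \mathbb{Z}$, passing to a subsequence I may assume $k_n = k_0$ is a fixed integer. The decisive observation is that by Lemma~\ref{lem:Hom form}, once $k_0$ is fixed the restriction $h_n|_V$ to the vertex group $V := G_w(a_1,a_2,b_1,b_2)$ becomes a single homomorphism $\varphi : V \to \FF_2$ sending $a_i \mapsto w_B^{k_0} b_i w_B^{-k_0}$ and $b_j \mapsto b_j$, independent of $n$, where $w_B := w(b_1,b_2)$. Consequently every element of $\ker\varphi$ belongs to $\ker h_n$ for every $n$, and hence to $\sk h_n$.

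Next I would exhibit a specific non-trivial element of $\ker\varphi$, namely $u := a_1 \cdot w_B^{k_0} b_1^{-1} w_B^{-k_0} \in V$; a direct computation gives $\varphi(u) = 1$. To see $u \neq 1$ in $V$, I would use the amalgamated-product structure $V = F(a_1,a_2) \agm{\langle w \rangle} F(b_1,b_2)$: the expression $u = a_1 \cdot (w_B^{k_0} b_1^{-1} w_B^{-k_0})$ is a normal-form word of length $2$, since neither $a_1 \in F(a_1,a_2)$ nor $w_B^{k_0} b_1^{-1} w_B^{-k_0} \in F(b_1,b_2)$ lies in the edge group $\langle w \rangle$ (using that $w$ is an Ivanov word, in particular not a proper power, and that $a_1, b_1$ are not powers of $w$).

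The final step is to close the argument via strictness. The preceding lemma shows that $V$ is a rigid vertex group in the essential JSJ of $D_{w,z}$, and since the only essential cyclic splitting of $G_w$, namely its double decomposition, is incompatible with the attachment of the $z$-edge (whose associated element $a_1 b_2 a_1 b_1 a_1 b_1$ is not conjugate into either side of that double), $V$ remains rigid in the restricted cyclic JSJ as well. Both incident edge groups $\langle w \rangle$ and $\langle z \rangle$ have centralizers (by the CSA property of limit groups) equal to themselves, both contained in $V$, so $\tilde V = V$. Strictness condition (i) therefore forces $\eta|_V$ to be a monomorphism for the quotient $\eta : D_{w,z} \twoheadrightarrow L = D_{w,z}/\sk h_n$, whereas $\eta(u) = 1$ while $u \neq 1$, yielding the required contradiction. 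Hence $|k_n| \to \infty$.

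The main obstacle, and the step requiring the most care, is justifying that $V$ really is a rigid vertex of the \emph{restricted cyclic} JSJ, not merely of the essential JSJ, so that the monomorphism demand in strictness~(i) applies to all of $V$. This is precisely where the Ivanov nature of $w$ and the interference of the $z$-edge with $G_w$'s internal double decomposition combine to prevent any compatible refinement of $V$.
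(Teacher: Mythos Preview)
Your proof is correct and follows essentially the same approach as the paper: assume by contradiction that $k_n$ is bounded, pass to a constant subsequence $k_n=k_0$, observe that then $h_n$ identifies $v(a_1,a_2)$ with $w_B^{k_0}v(b_1,b_2)w_B^{-k_0}$ for all $n$, and conclude that the quotient map $\tilde\eta$ fails to be injective on the rigid vertex $V=G_w(a_1,a_2,b_1,b_2)$, contradicting strictness. The paper's proof is terser (it does not single out a specific element $u$ nor explicitly revisit the rigidity of $V$ in the restricted cyclic JSJ), whereas you are more explicit on both points; your caution about distinguishing the essential from the cyclic JSJ is reasonable, though the incompatibility argument (that $z=a_1b_2a_1b_1a_1b_1$ is hyperbolic in the double splitting of $G_w$) indeed forces $V$ to be rigid in either decomposition.
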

\begin{proof}
Assume otherwise, then by extraction of subsequence, WLOG the original
sequence, $\{k_{n}\}_{\nn}$ is the constant sequence $k_{n}=k_{0}$.
Hence, for any word $v(x_{1},x_{2})\in F(x_{1},x_{2})$, the $h_{n}$-image
of the element $v(a_{1},a_{2})\in A$ is identified with the $h_{n}$-image
of $w_{B}^{\,k_{0}}v(b_{1},b_{2})w_{B}^{-k_{0}}$ for all $n\in\mathbb{N}$.
Therefore also $\tilde{\eta}(v(a_{1},a_{2}))=w^{k_{0}}\tilde{\eta}(v(b_{1},b_{2}))w^{-k_{0}}=\tilde{\eta}(w^{k_{0}}v(b_{1},b_{2})w^{-k_{0}})$,
contradicting strictness.
\end{proof}

We now examine the set of restricted strict shortening quotients of
$D_{w,z}$ with respect to two\linebreak{}
 generating sets: $\boldsymbol{g}=(a_{1},a_{2},b_{1},b_{2},c_{1},c_{2},d_{1},d_{2},\gamma,a_{1}d_{1}a_{1}d_{1})$
and $\boldsymbol{u}=(a_{1},a_{2},b_{1},b_{2},c_{1},c_{2},d_{1},d_{2},\gamma,a_{1}c_{1}a_{1}c_{1})$.

$ $
\begin{prop}
Let $(\tilde{L},\tilde{\eta})$ be a restricted strict $\boldsymbol{g}$-shortening
quotient of $D_{w,z}$ and $(\tilde{M},\tilde{\pi})$ be a restricted
strict $\boldsymbol{u}$-shortening quotient of $D_{w,z}$. So:
\begin{enumerate}
\item $\tilde{L}=G_{w}$, $\tilde{\eta}\vert_{A\agm{\langle w\rangle}B}=id$,
$\tilde{\eta}\vert_{C\agm{\langle w\rangle}D}$ sends $c_{1},c_{2},d_{1},d_{2}$
to $b_{1},b_{2},a_{1},a_{2}$ respectively (up to conjugation by some
bounded power $\ell$ of $w$), and $\tilde{\eta}(\gamma)=1_{G_{w}}$
(up to multiplication by $w^{-\ell}$).
\item $\tilde{M}$ is $\langle a_{1},a_{2},b_{1},b_{2},\gamma\vert\gamma^{2}a_{i}\gamma^{-2}=w^{\varepsilon}b_{i}w^{-\epsilon},\,i=1,2\rangle$
for some $\varepsilon$, or a quotient thereof.
\end{enumerate}
\end{prop}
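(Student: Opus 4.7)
Fix a sequence $\{h_{n}\}\subseteq\mathrm{Hom}_{B}(D_{w,z},\FF_{2})$ of $\boldsymbol{g}$- (resp.\ $\boldsymbol{u}$-) shortest restricted homomorphisms whose stable kernel defines $\tilde{\eta}$ (resp.\ $\tilde{\pi}$). By Lemma~\ref{lem:Hom form} each $h_{n}$ is encoded by integers $k_{n},\ell_{n},q_{n}$, and by Remark~\ref{rem:k_n const} only $\ell_{n}$ and $q_{n}$ are affected by modular automorphisms (Dehn twists along the $w$- and $z$-edges); the preceding lemma forces $\abs{k_{n}}\to\infty$. Set $W=\abs w$. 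The relevant lengths are $\abs{h_{n}(a_{i})}\sim 2W\abs{k_{n}}$, $\abs{h_{n}(c_{i})}\sim 2W\abs{\ell_{n}}$, $\abs{h_{n}(d_{i})}\sim 2W\abs{k_{n}+\ell_{n}}$, $\abs{h_{n}(z)}\sim 6W\abs{k_{n}}$, and a direct expansion yields
\[
\abs{h_{n}(a_{1}d_{1}a_{1}d_{1})}\sim(\abs{k_{n}}+3\abs{\ell_{n}}+\abs{k_{n}+\ell_{n}})W,\qquad \abs{h_{n}(a_{1}c_{1}a_{1}c_{1})}\sim(\abs{k_{n}}+3\abs{\ell_{n}-k_{n}}+\abs{\ell_{n}})W.
\]
Since $\abs{h_{n}(\gamma)}=\abs{h_{n}(z)^{q_{n}}w^{-\ell_{n}}}$ grows as $\approx 6W\abs{k_{n}}\abs{q_{n}}$, any nonzero $q_{n}$ makes $\gamma$ longer than every other generator, forcing $q_{n}=0$ in both shortening problems; extracting a subsequence, $q_{n}\equiv0$.

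\textbf{Part (1).} Combining $\abs{h_{n}(a_{1}d_{1}a_{1}d_{1})}\leq 2W\abs{k_{n}}+O(1)$ with the triangle inequality $\abs{k_{n}+\ell_{n}}\geq\abs{k_{n}}-\abs{\ell_{n}}$ gives $2\abs{\ell_{n}}\leq O(1)$, so $\ell_{n}$ is bounded and, passing to a subsequence, $\ell_{n}=\ell_{0}$ constant. Then for every $n$, $h_{n}(c_{i})=h_{n}(w^{\ell_{0}}b_{i}w^{-\ell_{0}})$, $h_{n}(d_{i})=h_{n}(w^{\ell_{0}}a_{i}w^{-\ell_{0}})$ and $h_{n}(\gamma)=w^{-\ell_{0}}$, so these equalities descend to $\tilde L=D_{w,z}/\sk h_{n}$. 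Under the identifications $c_{i}\mapsto w^{\ell_{0}}b_{i}w^{-\ell_{0}}$, $d_{i}\mapsto w^{\ell_{0}}a_{i}w^{-\ell_{0}}$, $\gamma\mapsto w^{-\ell_{0}}$, each of the three defining relations of $D_{w,z}$ reduces to the single relation $w(a)=w(b)$, so $\tilde L\cong G_{w}(a_{1},a_{2},b_{1},b_{2})$ with $\tilde{\eta}$ as stated.

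\textbf{Part (2).} Writing $\ell_{n}=tk_{n}$, the rescaled max $f(t)=\max(2,\,2\abs{t},\,2\abs{1+t},\,1+3\abs{t-1}+\abs{t})$ simplifies to $f(t)=3+\abs{2t-1}$ near the optimum, uniquely minimised at $t=\tfrac12$ with value $3$ attained simultaneously by $d_{i}$ and $a_{1}c_{1}a_{1}c_{1}$. Hence a $\boldsymbol{u}$-shortest $h_{n}$ satisfies $2\ell_{n}-k_{n}=\varepsilon$ for some bounded integer $\varepsilon$, constant after a further subsequence. Direct substitution then gives
\[
h_{n}(\gamma^{-1}b_{i}\gamma)=w^{\ell_{n}}b_{i}w^{-\ell_{n}}=h_{n}(c_{i}),\qquad h_{n}(\gamma^{-1}a_{i}\gamma)=w^{\ell_{n}+k_{n}}b_{i}w^{-(\ell_{n}+k_{n})}=h_{n}(d_{i}),
\]
\[
h_{n}(\gamma^{2}a_{i}\gamma^{-2})=w^{k_{n}-2\ell_{n}}b_{i}w^{2\ell_{n}-k_{n}}=w^{-\varepsilon}b_{i}w^{\varepsilon},
\]
so in $\tilde M$ we have $c_{i}=\gamma^{-1}b_{i}\gamma$, $d_{i}=\gamma^{-1}a_{i}\gamma$, and $\gamma^{2}a_{i}\gamma^{-2}=w^{-\varepsilon}b_{i}w^{\varepsilon}$. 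Eliminating $c_{i},d_{i}$ presents $\tilde M$ as a quotient of $\langle a_{1},a_{2},b_{1},b_{2},\gamma\mid\gamma^{2}a_{i}\gamma^{-2}=w^{-\varepsilon}b_{i}w^{\varepsilon}\rangle$, which is the proposition's presentation after substituting $\varepsilon\mapsto-\varepsilon$. Inherited relations of $D_{w,z}$ such as $w(a)=w(b)$ may force $\tilde M$ to be a proper quotient, as the statement permits.

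\textbf{Main obstacle.} The principal delicacy is the length analysis: one must verify that the explicit concatenations expressing $h_{n}(a_{1}c_{1}a_{1}c_{1})$, $h_{n}(a_{1}d_{1}a_{1}d_{1})$ and $h_{n}(z)$ as alternating products of $b_{i}$'s and powers of $w$ are freely reduced up to bounded cancellation, so that the leading asymptotics are accurate to $O(1)$. This relies on the structural feature that $w$, as an Ivanov word, begins and ends with distinct letters of $\FF_{2}$, so that inserting a single $b_{i}$ between consecutive powers of $w^{\pm 1}$ produces no inter-block cancellation. A secondary check is that $h_{n}(z)$ shares no common power with $w$, so that no nonzero $q_{n}$ can shorten $\abs{h_{n}(\gamma)}$ past the bound used above; this follows from strictness criterion (i) for $z$, already invoked in proving that $D_{w,z}$ is a limit group.
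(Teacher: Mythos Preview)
Your argument is correct and follows the same approach as the paper: parametrise each $h_{n}$ by $(k_{n},\ell_{n},q_{n})$ via Lemma~\ref{lem:Hom form}, normalise generator lengths by $W\abs{k_{n}}$, perform the minimax optimisation over $t=\ell_{n}/k_{n}$ (yielding $t=0$ for $\boldsymbol{g}$ and $t=\tfrac12$ for $\boldsymbol{u}$), rule out $q_{n}\neq0$ by comparing to the optimal max, and extract a subsequence. A couple of your justifications are slightly overstated---there \emph{is} bounded cancellation when $k_{n}<0$ since $w^{k_{n}}$ then ends in $b_{1}^{-1}$, and the estimate $\abs{h_{n}(\gamma)}\approx 6W\abs{k_{n}}\abs{q_{n}}$ ignores the interaction with $w^{-\ell_{n}}$ (the paper gets the sharper lower bound $4W\abs{k_{n}}\abs{q_{n}}$)---but these affect only $O(1)$ terms and lead to the same conclusions.
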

\begin{proof}
First consider the generating set $\boldsymbol{g}=(a_{1},a_{2},b_{1},b_{2},c_{1},c_{2},d_{1},d_{2},\gamma,a_{1}d_{1}a_{1}d_{1})$.
Let $(\tilde{L},\tilde{\eta})$ be a restricted strict $\boldsymbol{g}$-shortening
quotient of $D_{w,z}$ and let $\{h_{n}\}_{n\in\mathbb{N}}\subseteq Hom_{B}(D_{w,z},\FF_{2})$
be a sequence of restricted $\boldsymbol{g}$-shortest morphisms with
$Ker\tilde{\eta}=\sk h_{n}$ and with $k_{n},\ell_{n},q_{n}\in\mathbb{Z}$
as above.

For each $n\in\mathbb{N}$, $\underset{g\in\boldsymbol{g}}{max}\vert h_{n}(g)\vert=max\left\{ \begin{array}{c}
\vert b_{i}\vert,\vert w^{k_{n}}b_{i}w^{-k_{n}}\vert,\vert w^{\ell_{n}}b_{i}w^{-\ell_{n}}\vert,\\
\vert w^{\ell_{n}+k_{n}}b_{i}w^{-(\ell_{n}+k_{n})}\vert,\vert h_{n}(z^{q_{n}})w^{-\ell_{n}}\vert,\\
\vert w^{k_{n}}b_{1}w^{\ell_{n}}b_{1}w^{-\ell_{n}}b_{1}w^{\ell_{n}}b_{1}w^{-(k_{n}+\ell_{n})}\vert
\end{array}:i\in\{1,2\}\right\} $. It will be helpful to understand the asymptotic behaviour of these
distances (as $n\rightarrow\infty$) after normalisation by $tr(w^{k_{n}})$.
(Since the free group $B$ acts freely on its Cayley graph, it follows
that $tr(w)\neq0$ and likewise $tr(w^{k_{n}})\neq0$.)

$ $

Notice that $\abs{w^{t}}-tr(w^{t})=const$ for any $t\in\mathbb{Z}$,
and in particular $\frac{\abs{w^{k_{n}}}}{tr(w^{k_{n}})}=1+\frac{O(1)}{tr(w^{k_{n}})}\underset{n\rightarrow\infty}{\rightarrow}1$.\linebreak{}
 It also follows that $\frac{\abs{w^{\ell_{n}}}}{tr(w^{k_{n}})}-\frac{\abs{\ell_{n}}}{\abs{k_{n}}}=\frac{\abs{w^{\ell_{n}}}}{tr(w^{k_{n}})}-\frac{tr(w^{\ell_{n}})}{tr(w^{k_{n}})}=0+\frac{O(1)}{tr(w^{k_{n}})}\underset{n\rightarrow\infty}{\rightarrow}0$.
With the approximation of $\frac{\abs{h_{n}(z^{q_{n}})w^{-l_{n}}}}{tr(w^{k_{n}})}$
there is more need for care, because of the contribution of $q_{n}$.

$\abs{h_{n}(z)}=6tr(w)\cdot\abs{k_{n}}+s_{n}$, where (for large enough
$k_{n}$) $s_{n}\in\mathbb{R}$ is in fact dependent only on the sign
of $k_{n}$. $\abs{h_{n}(z^{q_{n}})}=\abs{h_{n}(z)}\cdot q_{n}$,
because there is no additional cancellation, as $h_{n}(z)$ is cyclically
reduced. If $q_{n}\geq0$ then $w^{-\ell_{n}}$ has no cancellations
with $h_{n}(z^{q_{n}})$ in the multiplication $h_{n}(z^{q_{n}})w^{-\ell_{n}}$,
so $\abs{h_{n}(z^{q_{n}})w^{-\ell_{n}}}=\abs{h_{n}(z)}\cdot q_{n}+\abs{w^{-\ell_{n}}}$.
If $q_{n}<0$, then $h_{n}(z^{q_{n}})w^{-\ell_{n}}=h_{n}(z^{q_{n}+1})\cdot\left(h_{n}(z^{-1})w^{-\ell_{n}}\right)$,
so $\abs{h_{n}(z^{q_{n}})w^{-\ell_{n}}}=\abs{h_{n}(z)}\cdot(\abs{q_{n}}-1)+\left(5tr(w)\cdot\abs{k_{n}}+tr(w)\cdot\abs{k_{n}+\ell_{n}}+s'_{n}\right)$
for $s'_{n}$ whose value depends on the sign of $(k_{n}+\ell_{n})$.
Using $\delta_{n}=\begin{cases}
1 & q_{n}<0\\
0 & 0\leq q_{n}
\end{cases}$ and combining both cases, it follows that $\frac{\abs{h_{n}(z^{q_{n}})w^{-l_{n}}}}{tr(w^{k_{n}})}-\left((6+\frac{s_{n}}{\abs{k_{n}}\cdot tr(w)})\cdot(\abs{q_{n}}-\delta_{n})+5\delta_{n}+\abs{\delta_{n}+\frac{\ell_{n}}{k_{n}}}+\frac{\delta_{n}s'_{n}}{\abs{k_{n}}\cdot tr(w)}\right)=0+\frac{O(1)}{tr(w^{k_{n}})}\underset{n\rightarrow\infty}{\rightarrow}0$.
(Notice that $\frac{s_{n}}{\abs{k_{n}}}=\frac{O(1)}{\abs{k_{n}}}\underset{n\rightarrow\infty}{\rightarrow}0$,
but $\frac{s_{n}}{\abs{k_{n}}}\cdot(\abs{q_{n}}-\delta_{n})$ cannot
be similarly dismissed. However, the term $(6+\frac{s_{n}}{\abs{k_{n}}\cdot tr(w)})\cdot(\abs{q_{n}}-\delta_{n})+5\delta_{n}$
can be lower-bounded by $5\cdot(\abs{q_{n}}-\delta_{n})+5\delta_{n}=5\cdot\abs{q_{n}}$.
Therefore $\frac{\abs{h_{n}(z^{q_{n}})w^{-l_{n}}}}{tr(w^{k_{n}})}\geq5\cdot\abs{q_{n}}+\abs{\delta_{n}+\frac{\ell_{n}}{k_{n}}}+\frac{O(1)}{tr(w^{k_{n}})}\geq4\cdot\abs{q_{n}}$,
and this bound will be useful in analysing the case $q_{n}\neq0$.)

$ $

So the distances can indeed be estimated in units of $tr(w^{k_{n}})=\vert k_{n}\vert\cdot\abs{tr(w)}$.
First assume $q_{n}=0$. 

\hspace{-0.5cm}$\underset{g\in\boldsymbol{g}}{max}\frac{\vert h_{n}(g)\vert}{tr(w^{k_{n}})}=max\left\{ \begin{array}{c}
\frac{O(1)}{tr(w^{k_{n}})},\,2+\frac{O(1)}{tr(w^{k_{n}})},\,2\frac{\vert\ell_{n}\vert}{\vert k_{n}\vert}+\frac{O(1)}{tr(w^{k_{n}})}\\
2\frac{\vert\ell_{n}+k_{n}\vert}{\abs{k_{n}}}+\frac{O(1)}{tr(w^{k_{n}})},\,\frac{\vert\ell_{n}\vert}{\vert k_{n}\vert}+\frac{O(1)}{tr(w^{k_{n}})},\\
1+3\frac{\vert\ell_{n}\vert}{\vert k_{n}\vert}+\frac{\vert k_{n}+\ell_{n}\vert}{\vert k_{n}\vert}+\frac{O(1)}{tr(w^{k_{n}})}
\end{array}:i\in\{1,2\}\right\} $. As mentioned in Remark \ref{rem:k_n const}, $k_{n}$ is given
and cannot be changed by $RMod(D_{w,z})$, but $\ell_{n}$ can be
changed by $RMod(D_{w,z})$. Since $\{h_{n}\}_{\nn}$ are restricted
$\boldsymbol{g}$-shortest homomorphisms, $\ell_{n}$ must be such
that $\underset{g\in\boldsymbol{g}}{max}\frac{\vert h_{n}(g)\vert}{tr(w^{k_{n}})}$
is minimal. Denote $x_{n}=\frac{\ell_{n}}{k_{n}}$, so $\ell_{n}$
must be chosen such that $x_{n}=\underset{\{x=j/k_{n}:j\in\mathbb{Z}\}}{argmin}\,max\left\{ 2,2\vert x\vert,2\vert x+1\vert,\vert x\vert,1+3\vert x\vert+\vert1+x\vert\right\} $.
But $\underset{x\in\mathbb{R}}{min}\text{ }max\left\{ 2,2\vert x\vert,2\vert x+1\vert,\vert x\vert,1+3\vert x\vert+\vert1+x\vert\right\} =2$
is realised at $x_{0}=0$, and therefore also $x_{n}=0+\frac{O(1)}{tr(w^{k_{n}})}$,
i.e. $\ell_{n}=0+O(1)$. By taking $q_{n}\neq0$, $\vert h_{n}(\gamma)\vert$
would raise the value of $\underset{g\in\boldsymbol{g}}{max}\vert h_{n}(g)\vert$
(for in this case $\frac{\abs{h_{n}(z^{q_{n}})w^{-l_{n}}}}{tr(w^{k_{n}})}>4\cdot\abs{q_{n}}\geq4$,
which already exceeds the minimal value 2 which is obtained in the
case $q_{n}=0$), hence $q_{n}=0$ . It follows that the only restricted
strict $\boldsymbol{g}$-shortening quotient of $D_{w,z}$ is $(G_{w},\tilde{\eta})$
where $\tilde{\eta}\vert_{A\agm{\langle w\rangle}B}=id$, $\tilde{\eta}\vert_{C\agm{\langle w\rangle}D}$
sends $c_{1},c_{2},d_{1},d_{2}$ to $b_{1},b_{2},a_{1},a_{2}$ respectively
(up to conjugation by some bounded power $\ell$ of $w$), and $\tilde{\eta}(\gamma)=1_{G_{w}}$
(up to multiplication by $w^{-\ell}$).

$ $

Next consider the generating set $\boldsymbol{u}=(a_{1},a_{2},b_{1},b_{2},c_{1},c_{2},d_{1},d_{2},\gamma,a_{1}c_{1}a_{1}c_{1})$.
Let $(\tilde{M},\tilde{\pi})$ be a restricted strict $\boldsymbol{u}$-shortening
quotient of $D_{w,z}$ and let $\{h_{n}\}_{n\in\mathbb{N}}\subseteq Hom_{B}(D_{w,z},\FF_{2})$
be a sequence of restricted $\boldsymbol{u}$-shortest morphisms with
$Ker\tilde{\pi}=\sk h_{n}$ and with $k_{n},\ell_{n},q_{n}\in\mathbb{Z}$
as in Lemma \ref{lem:Hom form}. By similar analysis, while initially
assuming $q_{n}=0$, ensuring $h_{n}$ is restricted $\boldsymbol{u}$-shortest
means finding $x_{n}=\frac{\ell_{n}}{k_{n}}$ which equals $\underset{\{x=j/k_{n}:j\in\mathbb{Z}\}}{argmin}\text{ }max\left\{ 2,2\vert x\vert,2\vert x+1\vert,\vert x\vert,1+\vert x\vert+3\vert x-1\vert\right\} $.

\hspace{-0.5cm}$\underset{x\in\mathbb{R}}{min}\text{ }max\left\{ 2,2\vert x\vert,2\vert x+1\vert,\vert x\vert,1+\vert x\vert+3\vert x-1\vert\right\} =3$,
and this value is realised at $x=\frac{1}{2}$,\linebreak{}
so $\ell_{n}\in\{\frac{k_{n}}{2}+O(1),\frac{k_{n}+1}{2}+O(1),\frac{k_{n}-1}{2}+O(1)\}$
(since $max\left\{ 2,2\vert x\vert,2\vert x+1\vert,\vert x\vert,1+\vert x\vert+3\vert x-1\vert\right\} $
is monotonically decreasing before $x=\frac{1}{2}$ and monotonically
increasing afterwards).\linebreak{}
Again, taking $q_{n}\neq0$ raises the value of $\underset{u\in\boldsymbol{u}}{max}\vert h_{n}(u)\vert$,
so indeed $q_{n}=0$. For every $n\in\mathbb{N}$ and\linebreak{}
every $1\leq i\leq2$, $h_{n}(\gamma c_{i}\gamma^{-1})=w^{-\ell_{n}}w^{\ell_{n}}b_{i}w^{-\ell_{n}}w^{\ell_{n}}=b_{i}=h_{n}(b_{i})$,
and \linebreak{}
$h_{n}(\gamma d_{i}\gamma^{-1})=w^{-\ell_{n}}w^{k_{n}+\ell_{n}}b_{i}w^{-k_{n}-\ell_{n}}w^{\ell_{n}}=w^{k_{n}}b_{i}w^{-k_{n}}=h_{n}(a_{i})$.
Denote $\varepsilon_{n}=k_{n}-2\ell_{n}$, so $\varepsilon_{n}=O(1)$
. By extraction of subsequence, $\varepsilon_{n}$ is a constant
sequence $\varepsilon$ and $h_{n}(\gamma^{2}a_{i}\gamma^{-2})=h_{n}(w^{\varepsilon}b_{i}w^{-\varepsilon})$.
The limit group $\tilde{M}=G/\sk h_{n}$ is therefore $\langle a_{1},a_{2},b_{1},b_{2},\gamma\vert\gamma^{2}a_{i}\gamma^{-2}=w^{\varepsilon}b_{i}w^{-\epsilon},\,i=1,2\rangle$
or a quotient thereof.
\end{proof}
\begin{figure}[tbh]
	\centering
	\includegraphics[width=0.7\linewidth]{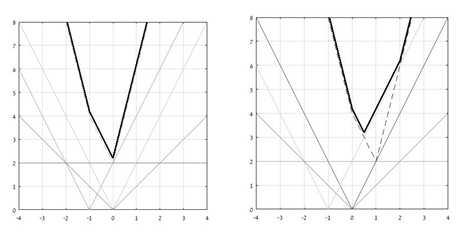}
	\caption{$max\left\{ 2,2\vert x\vert,2\vert x+1\vert,\vert x\vert,1+3\vert x\vert+\vert1+x\vert\right\} $ for $\boldsymbol{g}$ (left), and $max\left\{ 2,2\vert x\vert,2\vert x+1\vert,\vert x\vert,1+\vert x\vert+3\vert x-1\vert\right\} $ for $\boldsymbol{u}$ (right)}
	\label{fig:minimaxrepresentations}
\end{figure}

$ $

For every generating set, there exists a restricted strict maximal
shortening quotient. As seen above,  $(G_{w},\text{\ensuremath{\tilde{\eta}}})$
is the sole strict element of $RSQ(D_{w,z},\boldsymbol{g})$, and
is therefore the only strict element of $RMSQ(D_{w,z},\boldsymbol{g})$.
Likewise, for $\boldsymbol{u}$, any strict restricted shortening
quotient, and in particular any strict restricted maximal shortening
quotient $(\tilde{M},\tilde{\pi})$, is a (possibly not proper) quotient
of $\langle a_{1},a_{2},b_{1},b_{2},\gamma\vert\gamma^{2}a_{i}\gamma^{-2}=w^{\varepsilon}b_{i}w^{-\epsilon},\,i=1,2\rangle$
for some $\varepsilon$. However, no such group  can be isomorphic
to $(G_{w},\tilde{\eta})$; for example, the homology group of $(\tilde{M},\tilde{\pi})$
is $\mathbb{Z}^{t}$, $t\leq3$, whereas the homology group of $(G_{w},\tilde{\eta})$
is $\mathbb{Z}^{4}$. Since $(G_{w},\tilde{\eta})$ and $(\tilde{M},\tilde{\pi})$
are not isomorphic, they are in particular not SQ-isomorphic. A strict
restricted shortening quotient cannot be SQ-isomorphic to a non-strict
shortening quotient, so in fact the only restricted strict $\boldsymbol{g}$-shortening
quotient is not SQ-isomorphic to any restricted maximal $\boldsymbol{u}$-shortening
quotient. 

\section{Generator-independent shortening quotients\label{sec:examples}}

It is worth noting the following observation regarding properly maximal
$\boldsymbol{g}$-shortening quotients:
\begin{lem}
For a limit group $G$, TFAE:\label{lem:TFAE, section 4}
\begin{enumerate}
\item $\widetilde{MSQ}(G,\boldsymbol{g})=\widetilde{MSQ}(G,\boldsymbol{u})$
(up to isomorphism of shortening quotients between the elements of
both sets) for any two generating sets $\boldsymbol{g},\boldsymbol{u}$
of $G$. 
\item $\widetilde{MSQ}(G,\boldsymbol{g})\subseteq SQ(G,\boldsymbol{u})$
(up to SQ-isomorphism of the elements) for any two generating sets
$\boldsymbol{g},\boldsymbol{u}$ of $G$.
\end{enumerate}
\end{lem}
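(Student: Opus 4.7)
The plan is as follows. The direction $(1)\Rightarrow(2)$ is immediate, since by definition $\widetilde{MSQ}(G,\boldsymbol{u})\subseteq MSQ(G,\boldsymbol{u})\subseteq SQ(G,\boldsymbol{u})$, so if the two properly maximal sets agree up to SQ-isomorphism, any element of $\widetilde{MSQ}(G,\boldsymbol{g})$ is SQ-isomorphic to an element of $SQ(G,\boldsymbol{u})$. All the work is in $(2)\Rightarrow(1)$.

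For $(2)\Rightarrow(1)$, I would fix $(L,\eta)\in\widetilde{MSQ}(G,\boldsymbol{g})$ and try to produce a $(N_2,\pi_2)\in\widetilde{MSQ}(G,\boldsymbol{u})$ that is SQ-isomorphic to it (and then invoke the symmetric argument with $\boldsymbol{g}$ and $\boldsymbol{u}$ exchanged). Apply (2) once: $(L,\eta)$ is SQ-isomorphic to some $(N_1,\pi_1)\in SQ(G,\boldsymbol{u})$. Dominate $(N_1,\pi_1)$ by a maximal $\boldsymbol{u}$-shortening quotient (which exists by definition of $MSQ$) and then replace its class by a properly maximal one in $\widetilde{MSQ}(G,\boldsymbol{u})/\sim$; call the result $(N_2,\pi_2)\in\widetilde{MSQ}(G,\boldsymbol{u})$. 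Now apply (2) again, in the reverse direction: $(N_2,\pi_2)$ is SQ-isomorphic to some $(N_3,\pi_3)\in SQ(G,\boldsymbol{g})$, which in turn sits below some $(N_4,\pi_4)\in\widetilde{MSQ}(G,\boldsymbol{g})$.

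The key bookkeeping step is to verify that the composition of all the epimorphisms and SQ-isomorphisms produced above, together with the composition of the intervening modular automorphisms of $G$, witnesses $[(L,\eta)]\leq[(N_4,\pi_4)]$ in $\widetilde{MSQ}(G,\boldsymbol{g})/\sim$. Concretely, each SQ-isomorphism contributes a $\varphi_i\in Mod(G)$ and a group-isomorphism $\sigma_i$ making the defining square commute, while each domination contributes an epimorphism; chasing through the four commutative squares yields a single $\varphi\in Mod(G)$ and an epimorphism $\tau\colon N_4\twoheadrightarrow L$ with $\eta\circ\varphi=\tau\circ\pi_4$.

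Once that inequality is in place, maximality of $(L,\eta)$ in $\widetilde{MSQ}(G,\boldsymbol{g})/\sim$ forces $[(L,\eta)]=[(N_4,\pi_4)]$, and hence (using the Hopf property for limit groups, exactly as in the antisymmetry argument given in the preliminaries) every intermediate epimorphism in the chain $N_4\twoheadrightarrow N_3\cong N_2\twoheadrightarrow N_1\cong L$ is an isomorphism of groups. Tracing the modular automorphisms back through the chain, one reads off a genuine SQ-isomorphism between $(L,\eta)$ and $(N_2,\pi_2)\in\widetilde{MSQ}(G,\boldsymbol{u})$. The symmetric argument starting from an element of $\widetilde{MSQ}(G,\boldsymbol{u})$ gives the other inclusion, establishing (1). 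The main obstacle I anticipate is purely diagrammatic: organising the four commutative squares so that the composite genuinely witnesses the partial order on $\widetilde{MSQ}(G,\boldsymbol{g})/\sim$; once that is done, antisymmetry plus Hopf closes the argument mechanically.
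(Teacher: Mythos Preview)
Your proposal is correct and follows essentially the same route as the paper: start from a properly maximal $\boldsymbol{g}$-shortening quotient, use (2) to push it into $SQ(G,\boldsymbol{u})$, climb to $\widetilde{MSQ}(G,\boldsymbol{u})$, apply (2) in the reverse direction and climb again to $\widetilde{MSQ}(G,\boldsymbol{g})$, then use proper maximality together with the Hopf property to force all intermediate epimorphisms to be isomorphisms. The paper's write-up is slightly terser (it combines your $(N_3,\pi_3)$ and $(N_4,\pi_4)$ steps into one ``symmetric argument'' clause), but the diagram chase and the final appeal to antisymmetry/Hopf are identical.
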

\begin{proof}
The first direction is trivial. In the other direction, let $\boldsymbol{g},\boldsymbol{u}$
be two generating sets of $G$, and let $(Q_{\boldsymbol{g}},q_{\boldsymbol{g}})\in\widetilde{MSQ}(G,\boldsymbol{g})$.
By assumption, for every element of $\widetilde{MSQ}(G,\boldsymbol{g})$
there exists an element of $SQ(G,\boldsymbol{u})$ which is SQ-isomorphic
to it. So there exist $(Q_{\boldsymbol{u}},q_{\boldsymbol{u}})\in SQ(G,\boldsymbol{u})$,
a group isomorphism $\sigma_{1}:Q_{\boldsymbol{u}}\rightarrow Q_{\boldsymbol{g}}$
and $\pp_{1}\in Mod(G)$ such that $q_{\boldsymbol{g}}\circ\pp_{1}=\sigma_{1}\circ q_{\boldsymbol{u}}$.
Since $(Q_{\boldsymbol{u}},q_{\boldsymbol{u}})\in SQ(G,\boldsymbol{u})$,
there exist some $(M_{\boldsymbol{u}},\mu_{\boldsymbol{u}})\in MSQ(G,\boldsymbol{u})$
and an epimorphism $\sigma_{2}:M_{\boldsymbol{u}}\twoheadrightarrow Q_{\boldsymbol{u}}$
such that $q_{\boldsymbol{u}}=\sigma_{2}\circ\mu_{\boldsymbol{u}}$.
There exists some maximal element $(\tilde{M}_{\boldsymbol{u}},\tilde{\mu}_{\boldsymbol{u}})\in\widetilde{MSQ}(G,\boldsymbol{u})$
with an epimorphism $\sigma_{3}:\tilde{M}_{\boldsymbol{u}}\twoheadrightarrow M_{\boldsymbol{u}}$
and some $\pp_{3}\in Mod(G)$ such that $\mu_{\boldsymbol{u}}\circ\pp_{3}=\sigma_{3}\circ\tilde{\mu}_{\boldsymbol{u}}$.
We get the following commutative diagram:

\hspace{-0.5cm}$\begin{array}{ccc}
G & \overset{\pp_{1}\circ\pp_{3}=\pp}{\longrightarrow} & G\\
{\scriptstyle \tilde{\mu}_{\boldsymbol{u}}}\downarrow &  & \downarrow{\scriptstyle q_{\boldsymbol{g}}}\\
\tilde{M}_{\boldsymbol{u}} & \underset{\sigma_{1}\circ\sigma_{2}\circ\sigma_{3}=\sigma}{\twoheadrightarrow} & Q_{\boldsymbol{g}}
\end{array}$ 

\hspace{-0.5cm}Notice that by assumption also $\widetilde{MSQ}(G,\boldsymbol{u})\subseteq SQ(G,\boldsymbol{g})$,
so by symmetric argument there exist $(N_{\boldsymbol{g}},\eta_{\boldsymbol{g}})\in\widetilde{MSQ}(G,\boldsymbol{g})$,
$\psi\in Mod(G)$ and an epimorphism $\tau:N_{\boldsymbol{g}}\twoheadrightarrow\tilde{M}_{\boldsymbol{u}}$
such that $\tilde{\mu}_{\boldsymbol{u}}\circ\psi=\tau\circ\eta_{\boldsymbol{g}}$.

\hspace{-0.5cm}By adding this information to the previous diagram,
the resulting commutative diagram

\hspace{-0.5cm}$\begin{array}{ccc}
G & \overset{\pp\circ\psi}{\longrightarrow} & G\\
{\scriptstyle \eta_{\boldsymbol{g}}}\downarrow &  & \downarrow{\scriptstyle q_{\boldsymbol{g}}}\\
N_{\boldsymbol{g}} & \underset{\sigma\circ\tau}{\twoheadrightarrow} & Q_{\boldsymbol{g}}
\end{array}$

\hspace{-0.5cm}shows that $[(Q_{\boldsymbol{g}},q_{\boldsymbol{g}})]\leq[(N_{\boldsymbol{g}},\eta_{\boldsymbol{g}})]$.
Since $(Q_{\boldsymbol{g}},q_{\boldsymbol{g}})\in\widetilde{MSQ}(G,\boldsymbol{g})$,
$[(Q_{\boldsymbol{g}},q_{\boldsymbol{g}})]$ is maximal in this \linebreak{}
partial order, so $[(Q_{\boldsymbol{g}},q_{\boldsymbol{g}})]=[(N_{\boldsymbol{g}},\eta_{\boldsymbol{g}})]$.
In particular, $\sigma$ is a group isomorphism, and by the first
diagram $(Q_{\boldsymbol{u}},q_{\boldsymbol{u}})$ and $(\tilde{M}_{\boldsymbol{u}},\tilde{\mu}_{\boldsymbol{u}})$
are SQ-isomorphic. Therefore $\widetilde{MSQ}(G,\boldsymbol{g})\subseteq\widetilde{MSQ}(G,\boldsymbol{u})$
(up to SQ-isomorphism of the elements). By symmetric argument $\widetilde{MSQ}(G,\boldsymbol{g})\supseteq\widetilde{MSQ}(G,\boldsymbol{u})$
(up to SQ-isomorphism of the elements), hence the equality.
\end{proof}

\begin{rem*}
The same argument holds with reduction to strict maximal shortening
quotients, since (using the above notation) strictness of $(Q_{\boldsymbol{g}},q_{\boldsymbol{g}})$
passes to $(Q_{\boldsymbol{u}},q_{\boldsymbol{u}})$, so $(M_{\boldsymbol{u}},\mu_{\boldsymbol{u}})$
can be chosen from among the strict elements of $MSQ(G,\boldsymbol{u})$.
The strictness passes on to $(\tilde{M}_{\boldsymbol{u}},\tilde{\mu}_{\boldsymbol{u}})$,
and by symmetric argument also $(N_{\boldsymbol{g}},\eta_{\boldsymbol{g}})$
is strict. Likewise, the argument holds with reduction to restricted
shortening quotients.
\end{rem*}
$ $

As seen in the previous section, in general it cannot be assumed that
the set of (restricted) strict properly maximal shortening quotients
is independent of the generating set. By the lemma above, for a general
limit group $G$ with generating sets $\boldsymbol{g}$ and\textbf{
$\boldsymbol{u}$}, it cannot be assumed that the strict elements
of $\widetilde{MSQ}(G,\boldsymbol{g})$ are in $SQ(G,\boldsymbol{u})$.
However, there are some special cases in which it can be shown that
a strict maximal shortening quotient with respect to one generating
set is a shortening quotient with respect to any other generating
set. Some such cases will be described in this section. The recurring
tool which will be used is the test sequence.

\subsection{First case - strict maximal shortening quotient which is a free group}

Let $L$ be a limit group whose JSJ decomposition is $A\agm{\langle z\rangle}B$
such that $A$ and $B$ are both rigid vertex groups and $\langle z\rangle$
is maximal abelian in both vertex groups. Suppose $\boldsymbol{g}$
is a finite generating set of $L$ and $(\mathbb{F}_{\ell},\eta)$
is a strict maximal $\boldsymbol{g}$-shortening quotient of $L$
( for $2\leq\ell$). Let $\boldsymbol{u}$ be another finite generating
set of $L$. The aim is to find a $\boldsymbol{u}$-shortening quotient
which is SQ-isomorphic to $(\FF_{\ell},\eta)$.

Choose a set of free generators $\boldsymbol{x}=(x_{1},...,x_{\ell})$
of $\FF_{\ell}$. Let $X(\FF_{\ell})$ be the corresponding Cayley
graph, and let $\{h_{n}\}_{\nn}\subseteq Hom(\mathbb{F}_{\ell},\mathbb{F})$
be a test sequence  such that property 2  of the test sequence is
fulfilled with respect to $\boldsymbol{x}$. In particular, this means
that for any $f\in\FF_{\ell}$, on the one hand $\abs{h_{n}(f)}\leq\abs f_{X(\FF_{\ell})}\cdot\chi_{n}$
(by ignoring the possible cancellations), and on the other hand, $\abs{h_{n}(f)}\geq\abs f_{\cayfl}\cdot(\xi_{n}-2c_{n})$
(because of the upper bound on the small cancellation, see figure
\ref{fig:case 1-1}).

\begin{figure}[tbh]
	\centering
	\includegraphics[width=0.7\linewidth]{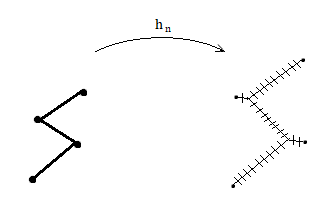}
	\caption{\label{fig:case 1-1}a path in $\protect\cayfl$ and its $h_{n}$-image in $X(\protect\FF)$}
	\label{fig:case 1-1}
\end{figure}

 Let $\{\varphi_{n}\}_{\nn}\subseteq Mod(L)$ and $\{\iota_{n}\}_{\nn}\subseteq Inn(\FF)$
such that $\{\iota_{n}\circ\left(h_{n}\circ\eta\right)\circ\pp_{n}\}_{\nn}$
are $\boldsymbol{u}$-shortest. $Mod(L)$ is generated by the elements
of $Inn(L)$ and by the Dehn twist $\tau_{z}(f)=\begin{cases}
\begin{array}{c}
f\\
zfz^{-1}
\end{array} & \begin{array}{c}
f\in A\\
f\in B
\end{array}\end{cases}$. By Remark \ref{rem:not Inn} it may be assumed that there exist
$\{k_{n}\}_{\nn}\subseteq\mathbb{Z}$ such that $\pp_{n}=\tau_{z}^{k_{n}}$.We
will show that the sequence $\{\pp_{n}\}_{\nn}$ is comprised of a
finite set of homomorphisms, by proving that $\{\abs{k_{n}}\}_{\nn}$
is a bounded set.

First, it is worth noticing the following facts regarding translation
lengths:
\begin{itemize}
\item The translation length of an element $f\in G$ in a Bass-Serre tree
$T$ of $G$ is also given by\linebreak{}
$tr_{T}(f)=\underset{n\rightarrow\infty}{lim}\,\frac{\abs{f^{n}}_{T}}{n}$
.
\item For $f\in\FF_{\ell}$, the translation length of $h_{n}(f)\in\FF$
can be estimated in terms of $tr_{X(\FF_{\ell})}$ similarly to the
way that displacement length in $X(\FF)$ can be estimated in terms
of displacement length in $X(\FF_{\ell})$: $tr_{X(\FF)}(h_{n}(f))=\underset{t\rightarrow\infty}{lim}\,\frac{\abs{h_{n}(f^{t})}}{t}\leq\underset{t\rightarrow\infty}{lim}\,\frac{\chi_{n}\cdot\abs{f^{t}}}{t}=\chi_{n}\cdot tr_{X(\FF_{\ell})}$,
and likewise\linebreak{}
$tr_{X(\FF)}(h_{n}(f))\geq(\xi_{n}-2c_{n})\cdot tr_{X(\FF_{\ell})}$.
\item Translation lengths of conjugate elements are equal.
\end{itemize}
Let $v=ab$ such that $a\in A\backslash\langle z\rangle$ and $b\in B\backslash\langle z\rangle$.
Since $\langle z\rangle$ is maximal abelian in both vertex groups,
this choice ensures that neither $a$ nor $b$ commute with $z$.
Since $1\neq[a,z]\in A$, $1\neq[b,z]\in B$ and $\eta$ is injective
on $A$ and on $B$, it follows that $\eta([a,z]),\,\eta([b,z])\neq1$
and therefore also $\eta(a)$ and $\eta(b)$ do not commute with $\eta(z)$.
Consequently, $\eta(a)$ and $\eta(b)$ do not have the same axis
as $\eta(z)$. So for large enough $N\in\mathbb{N}$, i.e. when $tr_{\FF_{\ell}}(\eta(z^{\pm N}))>\abs{\eta(a)}_{X(\FF_{\ell})}+\abs{\eta(b)}_{X(\FF_{\ell})}$,
it follows that\linebreak{}
\hspace{-0.5cm}$\abs{\eta(\tau_{z}^{\pm N}(v))}_{X(\FF_{\ell})}\geq2N\cdot tr_{\FF_{\ell}}(\eta(z))-(\abs{\eta(a)}_{X(\FF_{\ell})}+\abs{\eta(b)}_{X(\FF_{\ell})})$.
In fact, this lower bound holds also for the translation length: 

\hspace{-0.5cm}$tr_{\FF_{\ell}}(\eta(\tau_{z}^{\pm N}(v)))=\underset{t\rightarrow\infty}{lim}\,\frac{\abs{\eta\circ\tau_{z}^{\pm N}(v)^{t})}}{t}\geq\underset{t\rightarrow\infty}{lim}\,\frac{tr_{X(\FF_{\ell})}(\eta(z))\cdot2N\cdot t-\left(\abs{\eta(a)}_{X(\FF_{\ell})}+\abs{\eta(b)}_{X(\FF_{\ell})}\right)\cdot t}{t}=$

$=2N\cdot tr_{X(\FF_{\ell})}(\eta(z))-\left(\abs{\eta(a)}_{X(\FF_{\ell})}+\abs{\eta(b)}_{X(\FF_{\ell})}\right)$.

Thus, if BWOC $\abs{k_{n}}\underset{n\rightarrow\infty}{\rightarrow}\infty$,
then for large enough $n\in\mathbb{N}$,

\hspace{-0.5cm}$tr_{\FF_{\ell}}(\eta(\pp_{n}(v)))\geq2\abs{k_{n}}\cdot tr_{\FF_{\ell}}(\eta(z))-(\abs{\eta(a)}_{X(\FF_{\ell})}+\abs{\eta(b)}_{X(\FF_{\ell})})$.
Using this and the aforementioned properties of translation lengths,
we see that

\hspace{-0.5cm}$tr_{\FF}(\iota_{n}\circ h_{n}\circ\eta\circ\pp_{n}(v))=tr_{\FF}(h_{n}\circ\eta\circ\pp_{n}(v))\geq(\xi_{n}-2c_{n})\cdot tr_{\FF_{\ell}}(\eta(\pp_{n}(v)))\geq$

$\geq(\xi_{n}-2c_{n})\cdot\left(2\abs{k_{n}}\cdot tr_{\FF_{\ell}}(\eta(z))-(\abs{\eta(a)}_{X(\FF_{\ell})}+\abs{\eta(b)}_{X(\FF_{\ell})})\right)$.

On the other hand, $v$ can be written as a word $v=u_{v_{1}}\cdot...\cdot u_{v_{m}}$
in the letters of the generating set $\boldsymbol{u}$. So $\abs{\iota_{n}\circ h_{n}\circ\eta\circ\pp_{n}(v)}\leq\underset{i=1}{\overset{m}{\sum}}\abs{\iota_{n}\circ h_{n}\circ\eta\circ\pp_{n}(u_{v_{i}})}\leq m\cdot\underset{u\in\boldsymbol{u}}{max}\,\abs{h_{n}\circ\eta(u)}\leq m\cdot\chi_{n}\cdot\underset{u\in\boldsymbol{u}}{max}\,\abs{\eta(u)}$
(the middle inequality due to the fact that $\iota_{n},\,\pp_{n}$
were chosen so as to $\boldsymbol{u}$-shorten $h_{n}\circ\eta$).
Since translation length is upper-bounded by displacement length,
it follows that

\hspace{-0.5cm}$(\xi_{n}-2c_{n})\cdot\left(2\abs{k_{n}}\cdot tr_{\FF_{\ell}}(\eta(z))-(\abs{\eta(a)}_{X(\FF_{\ell})}+\abs{\eta(b)}_{X(\FF_{\ell})})\right)\leq m\cdot\chi_{n}\cdot\underset{u\in\boldsymbol{u}}{max}\,\abs{\eta(u)}$.
By rearranging the inequality and taking limsup on both sides:

\hspace{-0.5cm}$\underset{n\rightarrow\infty}{limsup}\,\abs{k_{n}}\leq\left(m\cdot\underset{u\in\boldsymbol{u}}{max}\,\abs{\eta(u)}_{X(\FF_{\ell})}+(\abs{\eta(a)}_{X(\FF_{\ell})}+\abs{\eta(b)}_{X(\FF_{\ell})})\right)/2tr_{\FF_{\ell}}(\eta(z))$.

\hspace{-0.5cm}(It is possible to divide by $tr_{\FF_{\ell}}(\eta(z))$
because $\eta$ is strict and in particular $\eta\vert_{A}$ is injective,
hence $\eta(z)\neq1_{\FF_{\ell}}$ and has nonzero translation length
in $X(\FF_{\ell})$.) So $\abs{k_{n}}$ is eventually bounded, a contradiction.

This gives a global bound on the set $\{\abs{k_{n}}\}_{\nn}$, and
hence also for $\{\pp_{n}\}_{\nn}$. By extraction of subsequence,
it may be assumed that $\{\pp_{n}\}_{\nn}$ is the constant sequence
$\pp_{n}=\pp_{0}$. Since $\eta\circ\pp_{0}$ is a composition of
an epimorphism and an isomorphism, it is surjective. Therefore:

\hspace{-0.5cm}$\FF_{\ell}\cong\eta\circ\pp_{o}(L)=\eta\circ\pp_{o}(L)/\sk h_{n}=\eta\circ\pp_{o}(L)/\sk(\iota_{n}\circ h_{n})=$

$=L/\sk(\iota_{n}\circ h_{n}\circ\eta\circ\pp_{o})=L/\sk(\iota_{n}\circ h_{n}\circ\eta\circ\pp_{n})$.

\hspace{-0.5cm}So $(\FF_{\ell},\eta\circ\pp_{0})$ is a $\boldsymbol{u}$-shortening
quotient, and it is SQ-isomorphic to $(\FF_{\ell},\eta)$, as seen
by the commutativity of the diagram 

\hspace{-0.5cm}$\begin{array}{ccc}
L & \overset{\pp_{1}}{\longrightarrow} & L\\
{\scriptstyle \eta\circ\pp_{0}}\downarrow & \searrow^{\eta\circ\pp_{1}} & \downarrow{\scriptstyle \eta}\\
\FF_{\ell} & \underset{id}{\longrightarrow} & \FF_{\ell}
\end{array}$.

\subsection{Second case - two levels of single-edged JSJ decompositions}

Suppose $L$ is a limit group whose cyclic JSJ decomposition is $L=A\agm{\langle z\rangle}B$,
where $A,B$ are both rigid and $\langle z\rangle$ is maximal abelian
in both vertex groups. Let $\boldsymbol{g},\boldsymbol{u}$ be two
finite generating sets, and let $(Q,\eta)$ be a strict maximal $\boldsymbol{g}$-shortening
quotient of $L$ with a cyclic JSJ decomposition $Q=D\agm{\langle c\rangle}E$
such that $D,E$ are both rigid and $\langle c\rangle$ is maximal
abelian in both vertex groups. Denote $\tau_{z}$ the Dehn twist along
$z$. Since $z$ generates the only edge group of $L$'s JSJ decomposition
and the vertex groups are rigid, $Mod(L)$ is generated by $\tau_{z}$
and $Inn(L)$. Also assume that the next level of the strict resolution
is $(\FF_{\ell},\pi)$, and fix some free generating set $\boldsymbol{x}=(x_{1},...,x_{\ell})$.

\begin{lem}
$\eta(z)$ is hyperbolic in $Q$.
\end{lem}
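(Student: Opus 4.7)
Argue by contradiction: suppose $\eta(z)$ is elliptic in the Bass--Serre tree $T_Q$ of $Q=D\agm{\langle c\rangle}E$, so that $\eta(z)$ is conjugate into $D$ or $E$. First observe that strictness of $\eta$, together with $\langle z\rangle$ being maximal abelian in both $A$ and $B$, ensures that $\eta|_{A}$ and $\eta|_{B}$ are monomorphisms. Consider the induced action of $L$ on $T_Q$ via $\eta$.

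The key dichotomy is whether $\eta(A)$ and $\eta(B)$ are elliptic in $T_Q$. Suppose to the contrary that both are. Since $\eta(z)\in\eta(A)\cap\eta(B)$ is elliptic and $T_Q$ is bipartite with maximal cyclic edge stabilisers, a case analysis of the possible fixed-vertex configurations (using that no nontrivial element of $Q$ can fix a segment of length $\geq 2$) shows that $\eta(A)$ and $\eta(B)$ must fix two adjacent vertices of distinct types. Surjectivity of $\eta$ onto $Q=\langle\eta(A),\eta(B)\rangle$ then forces $\eta(A)$ and $\eta(B)$ to coincide with conjugates of $D$ and $E$ respectively, for otherwise $\langle\eta(A),\eta(B)\rangle$ would be a proper subgroup of $Q$. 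Finally, since $z$ has no proper roots in $A$ (as $\langle z\rangle$ is maximal cyclic) and $\eta|_A$ is injective, one concludes $\eta(z)=c^{\pm1}$, so $\eta$ induces an isomorphism $L\stackrel{\sim}{\to}Q$ carrying the JSJ of $L$ onto that of $Q$. This contradicts the properness of $\eta$, which is implicit in $(Q,\eta)$ being a genuine intermediate step of the strict MR resolution with a further level $(\FF_\ell,\pi)$.

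Therefore, without loss of generality, $\eta(A)$ is not elliptic in $T_Q$. Then $A$ acts nontrivially on its minimal $\eta(A)$-invariant subtree of $T_Q$, yielding a nontrivial graph-of-groups decomposition of $A$. Its edge groups are of the form $A\cap\eta^{-1}(\langle c\rangle^g)$; by injectivity of $\eta|_A$ they inject into $\langle c\rangle^g\cong\mathbb{Z}$ and are therefore cyclic. Moreover, because $\eta(z)$ is elliptic in $T_Q$ by assumption, $z$ is elliptic in this induced splitting of $A$, so the splitting is relative to $\langle z\rangle$. This contradicts the rigidity of $A$ in the JSJ of $L$, since a rigid vertex group admits no nontrivial cyclic splitting relative to the incident edge group. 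Hence $\eta(z)$ must be hyperbolic in $T_Q$.

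The main obstacle will be the ``both elliptic'' case: one must combine the bipartite structure of $T_Q$, the maximal cyclic condition on $\langle c\rangle$, the injectivity of $\eta|_A$ and $\eta|_B$, and the absence of proper roots of $z$ in $A$, to force $\eta$ to be an isomorphism onto $Q$ and thereby clash with properness of the resolution step. Once that case is eliminated, the contradiction with rigidity is a direct Bass--Serre argument.
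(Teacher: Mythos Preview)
Your overall strategy matches the paper's: argue by contradiction, reduce to $\eta(A)$ and $\eta(B)$ both being elliptic, use acylindricity of $T_Q$ to pin them to adjacent vertices, and conclude that $\eta$ is an isomorphism, contradicting properness. Your dichotomy is just the contrapositive of the paper's one-line ``$A,B$ rigid $\Rightarrow$ $\eta(A),\eta(B)$ elliptic'', so there is no real difference there.

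The divergence is in the ``both elliptic'' endgame. Once $\eta(A)\subseteq D$ and $\eta(B)\subseteq E$ (after conjugation), the paper shows injectivity of $\eta$ directly via normal forms: if $v=a_1b_1\cdots a_\ell b_\ell$ is reduced in $L$ with $[a_i,z]\neq 1$ and $[b_i,z]\neq 1$, then injectivity of $\eta|_A$, $\eta|_B$ gives $[\eta(a_i),\eta(z)]\neq 1$ and $[\eta(b_i),\eta(z)]\neq 1$; since $\eta(z)\in\langle c\rangle$ and $\langle c\rangle$ is abelian, this forces $\eta(a_i)\notin\langle c\rangle$, $\eta(b_i)\notin\langle c\rangle$, so $\eta(v)$ is a nontrivial normal form in $D*_{\langle c\rangle}E$. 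Bijectivity of $\eta$ follows immediately. Your route instead asserts $\eta(A)=D$, $\eta(B)=E$ from surjectivity, and then $\eta(z)=c^{\pm1}$. The equality $\eta(A)=D$ is true, but your one-line justification (``otherwise $\langle\eta(A),\eta(B)\rangle$ would be proper'') is not self-evident: verifying it amounts to showing $\langle\eta(A),\eta(B)\rangle\cap D=\eta(A)$, which requires exactly the normal-form computation above. So your detour gains nothing and leaves that step under-argued. Once injectivity is established you are done; the further claim $\eta(z)=c^{\pm1}$ is unnecessary.

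Two minor points. The acylindricity you invoke comes from $\langle c\rangle$ being \emph{malnormal} in $D$ and in $E$ (limit groups are CSA and $\langle c\rangle$ is maximal abelian), not merely from being maximal cyclic. And the properness of $\eta$ is a general fact about shortening quotients; the existence of a further level $(\FF_\ell,\pi)$ in the resolution does not by itself preclude $\eta$ from being an isomorphism.
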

\begin{proof}
Otherwise, both $\eta(A)$ and $\eta(B)$ must be elliptic too, due
to the rigidity of $A$ and $B$. Clearly $A$ and $B$ cannot both
embed into the same vertex group of $Q$, as that would contradict
surjectivity. Therefore WLOG $\eta(A)\subseteq D$ and $\eta(B)\subseteq X^{p}$
for $X\in\{D,E\}$ and $p\in D\agm{\langle c\rangle}E$ ($X^{p}$
is the conjugation of $X$ by $p$). Since $\langle c\rangle$ is
maximal abelian in both vertex groups, it follows that the Bass-Serre
tree associated with $D\agm{\langle c\rangle}E$ is 1-acylindrical.
As $1\neq\eta(z)\in\eta(A)\cap\eta(B)\subseteq D\cap X^{p}$, by 1-acylindricity
the vertices stabilised by $D$ and by $X^{p}$ must be neighbours.
Specifically, $X=E$ and WLOG $p=1$.

Now that $\eta(A)\subseteq D$ and $\eta(B)\subseteq E$, it is possible
to show that $\eta$ is injective: let $1\neq v\in A\agm{\langle z\rangle}B$
have the normal form $v=a_{1}b_{1}...a_{\ell}b_{\ell}$, i.e. $a_{i}\in A$
and $b_{i}\in B$ s.t. $[a_{i},z]\neq1$, $[b_{i},z]\neq1$ for $1\leq i\leq\ell$
(except perhaps $a_{1}=1$ or $b_{\ell}=1$, but ignore this case
for simplicity of notation). $\eta$ is strict, and in particular
$\eta\vert_{A}$ and $\eta\vert_{B}$ are injective, so $\eta([a_{i},z])\neq1$
and $\eta([b_{i},z])\neq1$. In addition, $\eta(a_{i})\in\eta(A)\subseteq D$
and $\eta(b_{i})\in\eta(B)\subseteq E$. It follows that $\eta(v)=\eta(a_{1})\eta(b_{1})...\eta(a_{\ell})\eta(b_{\ell})$
is a non-trivial normal form, so $\eta(v)\neq1$. Hence $Ker(\eta)=\{1\}$
and $\eta$ is injective.

The quotient map $\eta$ is therefore an isomorphism. This contradicts
the fact that shortening quotients are proper quotients.

\end{proof}
Hyperbolicity is a property of the entire conjugacy class of an element.
The fact that $\eta(z)$ is hyperbolic means that the shortest normal
form of a cyclically reduced conjugate $\tilde{z}$ is $\tilde{z}=d_{1}e_{1}...d_{m_{z}}e_{m_{z}}$,
where $d_{i}\in D\backslash\langle c\rangle$, $e_{i}\in E\backslash\langle c\rangle$
for all $1\leq i\leq m_{z}$, and $1\leq m_{z}$. (If the normal
form were to begin and end with nontrivial elements both from $D$,
for example, then by conjugation, $(d_{1})^{-1}\tilde{z}d_{1}$ is
a normal form that is shorter than the previous one, a contradiction.)

$ $

Let $\{\aij h1n\}_{\nn}\subseteq Hom(Q,\FF)$ be a test sequence relative
to the JSJ decompositions of the resolution $Q\overset{\pi}{\rightarrow}\FF_{\ell}$,
so there exist $\{\aij h2n\}_{\nn}\subseteq Hom(\FF_{\ell},\FF)$
such that:
\begin{itemize}
\item for any $f\in\FF_{\ell}$, $(\xi_{n}-2c_{n})\cdot\abs f_{X(\FF_{\ell})}\leq\abs{\aij h2n(f)}\leq\chi_{n}\cdot\abs f_{X(\FF_{\ell})}$;
\item there exist $\{\sigma_{n}\}_{\nn}\subseteq Mod(Q)$ such that $\aij h1n=\aij h2n\circ\pi\circ\sigma_{n}$.
In fact $\sigma_{n}=\rho_{c}^{t_{n}}$ where $\rho_{c}$ is the Dehn
twist $\rho_{c}(f)=\begin{cases}
\begin{array}{c}
f\\
cfc^{-1}
\end{array} & \begin{array}{c}
f\in D\\
f\in E
\end{array}\end{cases}$ and $\{t_{n}\}_{\nn}\subseteq\mathbb{Z}$ is a sequence whose absolute
values tend to infinity.
\end{itemize}
It is also worth noting the following observations regarding $\aij h1n$
(derived from property 3 in the definition of test sequences):
\begin{itemize}
\item Since $\sk h_{n}^{(1)}=1$, elements which do not commute in $Q$
have images in $\FF$ which eventually do not commute, and thus do
not share an axis. 
\item Since $\eta(z)$ is hyperbolic in the JSJ decomposition of $Q$, its
has non-trivial translation length in the Bass-Serre tree $T$ associated
with the cyclic JSJ decomposition of $Q$. Therefore, eventually also
$\aij h1n(\eta(z))$ has non-trivial translation length in $X(\FF)$. 
\end{itemize}
$ $

Take $\{\pp_{n}\}_{\nn}\in Mod(L)$ and $\{\iota_{n}\}_{\nn}\subseteq Inn(\FF)$
such that $\{\iota_{n}\circ(h_{n}\circ\pi\circ\sigma_{n}\circ\eta)\circ\pp_{n}\}_{\nn}$
are $\boldsymbol{u}$-shortest. $\tau_{z}$ and $Inn(L)$ generate
$Mod(L)$, but again by Remark \ref{rem:not Inn} it may be assumed
that  there exist $\{k_{n}\}_{\nn}\subseteq\mathbb{Z}$ such that
$\pp_{n}=\tau_{z}^{k_{n}}.$ Again, the aim is to show that the sequence
$\{\pp_{n}\}_{\nn}$ is comprised of a finite set of homomorphisms,
by proving that $\abs{k_{n}}$ is eventually bounded. By way of contradiction,
assume that $\abs{k_{n}}$ tends to infinity.

$ $

First analyse $\pi\circ\sigma_{n}(\tilde{z})$: recall the normal
form $\tilde{z}=d_{1}e_{1}d_{2}e_{2}...d_{m_{z}}e_{m_{z}}$ , which
has more than one nontrivial letter because $\tilde{z}$ is hyperbolic.
 Also note that since $c\in D\backslash\{1_{Q}\}$ and $\pi$ is
strict, it follows that $\pi(c)\neq1_{\FF_{\ell}}$, and therefore
$\pi(c)$ has nonzero translation length in $X(\FF_{\ell})$. Denote\linebreak{}
$J=\underset{i=1}{\overset{m_{z}}{\sum}}(\abs{\pi(d_{i})}_{X(\FF_{\ell})}+\abs{\pi(e_{i})}_{X(\FF_{\ell})})$
.

Because the normal form of $\tilde{z}$ finishes with a letter from
$E$ and starts with a letter from $D$, it follows that the normal
form of $\tilde{z}^{k_{n}}$ is simply $k_{n}$ consecutive copies
of the normal form of $\tilde{z}$. Therefore:

\hspace{-0.5cm}$tr_{\FF_{\ell}}\left(\pi\circ\sigma_{n}\circ\eta(z^{k_{n}})\right)=tr_{\FF_{\ell}}\left(\pi\circ\sigma_{n}(\tilde{z}^{k_{n}})\right)=tr_{\FF_{\ell}}\left(\pi\left((d_{1}c^{t_{n}}e_{1}c^{-t_{n}}...d_{m_{z}}c^{t_{n}}e_{m_{z}}c^{-t_{n}})^{k_{n}}\right)\right)\geq$

$\geq2m_{z}\cdot\abs{k_{n}}\cdot\abs{t_{n}}\cdot tr_{\FF_{\ell}}(\pi(c))-\abs{k_{n}}\cdot\left(\underset{i=1}{\overset{m_{z}}{\sum}}(\abs{\pi(d_{i})}_{X(\FF_{\ell})}+\abs{\pi(e_{i})}_{X(\FF_{\ell})})\right)=$

$=\abs{k_{n}}\cdot(2m_{z}\cdot\abs{t_{n}}\cdot tr_{\FF_{\ell}}(\pi(c))-J)$

$ $

As in the previous case, let $v=ab$ where $a\in A\backslash\langle z\rangle$
and $b\in B\backslash\langle z\rangle$. Both $\eta(a)$ and $\eta(b)$
have normal forms with respect to the JSJ decomposition of $Q$: $\eta(a)=\aij da1\aij ea1\cdot...\cdot\aij da{m_{a}}\aij ea{m_{a}}\aij da{m_{a}+1}$
and\linebreak{}
$\eta(b)=\aij db1\aij eb1\cdot...\cdot\aij db{m_{b}}\aij eb{m_{b}}\aij db{m_{b}+1}$,
where $\aij dai,\aij dbi\in D\backslash\langle c\rangle$ and $\aij eai,\aij ebi\in E\backslash\langle c\rangle$,
except \linebreak{}
perhaps $\aij da1$, $\aij db1$, $\aij da{m_{a}+1}$ and $\aij db{m_{b}+1}$,
each of which may equal $1_{Q}$. Denote\linebreak{}
$Y=\{\aij daj,\aij dbi,\aij eaj,\aij ebi:1\leq j\leq m_{a},1\leq i\leq m_{b}\}\cup\{\aij da{m_{a}+1},\aij db{m_{b}+1}\}$
and $K=\underset{y\in Y}{max}\,\abs{\pi(y)}_{X(\FF_{\ell})}$ .

\hspace{-0.5cm}$\abs{\pi\circ\sigma_{n}\circ\eta(a)}_{X(\FF_{\ell})}\leq\underset{i=1}{\overset{m_{a}}{\sum}}\left(\abs{\pi(\aij dai)}_{X(\FF_{\ell})}+\abs{\pi(\aij eai)}_{X(\FF_{\ell})}\right)+\abs{\pi(\aij da{m_{a}+1})}_{X(\FF_{\ell})}+2m_{a}\cdot\abs{t_{n}}\cdot\abs{\pi(c)}_{X(\FF_{\ell})}\leq$

$\leq(2m_{a}+1)\underset{y\in Y}{max}\,\abs{\pi(y)}_{X(\FF_{\ell})}+2m_{a}\cdot\abs{t_{n}}\cdot\abs{\pi(c)}_{X(\FF_{\ell})}=(2m_{a}+1)K+2m_{a}\cdot\abs{t_{n}}\cdot\abs{\pi(c)}_{X(\FF_{\ell})}$
.

\hspace{-0.5cm}Likewise, $\abs{\pi\circ\sigma_{n}\circ\eta(b)}_{X(\FF_{\ell})}\leq(2m_{b}+1)K+2m_{b}\cdot\abs{t_{n}}\cdot\abs{\pi(c)}_{X(\FF_{\ell})}$
.

Now, recall that any two elements of $Q$ which do not share an axis
in $T$, eventually have images that do not share an axis in $\aij h1n(Q)$.
 $a$ and $b$ do not commute with $z$, so by strictness, $\eta(a)$
and $\eta(b)$ both do not commute with $\eta(z).$ Therefore, neither
$\eta(a)$ nor $\eta(b)$ share an axis with $\eta(z)$ in $T$. Therefore
$tr_{T}(\eta\circ\pp_{n}(v))\geq2\cdot tr_{T}(\eta(z^{k_{n}}))-(\abs{\eta(a)}_{T}+\abs{\eta(b)}_{T})$.
This behaviour is reflected by $\aij h1n(Q)$: $tr_{\FF}(\aij h1n(\eta\circ\pp_{n}(v)))\geq2\cdot tr_{\FF}(\aij h1n(\eta(z^{k_{n}})))-(\abs{\aij h1n(\eta(a))}+\abs{\aij h1n(\eta(b))})$.
Consequently, the following holds:

$ $

\hspace{-0.5cm}$tr_{\FF}\left(\iota_{n}\circ\aij h2n\circ\pi\circ\sigma_{n}\circ\eta\circ\pp_{n}(v)\right)=tr_{\FF}\left(\aij h2n\circ\pi\circ\sigma_{n}\circ\eta\circ\pp_{n}(v)\right)=$

$=tr_{\FF}(\aij h1n(\eta\circ\pp_{n}(v)))\geq2\cdot tr_{\FF}(\aij h1n(\eta(z^{k_{n}})))-(\abs{\aij h1n(\eta(a))}+\abs{\aij h1n(\eta(b))})=$

$=2\cdot tr_{\FF}(\aij h2n\circ\pi\circ\sigma_{n}\circ\eta(z^{k_{n}}))-(\abs{\aij h2n\circ\pi\circ\sigma_{n}\circ\eta(a)}+\abs{\aij h2n\circ\pi\circ\sigma_{n}\circ\eta(b)})\geq$

$\geq2(\xi_{n}-2c_{n})\cdot tr_{\FF_{\ell}}(\pi\circ\sigma_{n}\circ\eta(z^{k_{n}}))-\chi_{n}\cdot(\abs{\pi\circ\sigma_{n}\circ\eta(a)}_{X(\FF_{\ell})}+\abs{\pi\circ\sigma_{n}\circ\eta(b)}_{X(\FF_{\ell})})\geq$

$\geq2(\xi_{n}-2c_{n})\cdot\abs{k_{n}}\cdot\left(2m_{z}\cdot\abs{t_{n}}\cdot tr_{\FF_{\ell}}(\pi(c))-J\right)-\chi_{n}\cdot\left(2K(m_{a}+m_{b}+1)+2(m_{a}+m_{b})\cdot\abs{t_{n}}\cdot\abs{\pi(c)}_{X(\FF_{\ell})}\right)$.

$ $

On the other hand, $v$ can be written as a word $v=u_{v_{1}}\cdot...\cdot u_{v_{m}}$
in the letters of the generating set $\boldsymbol{u}$. For every
$u\in\boldsymbol{u}$ there exists a normal form $\eta(u)=\aij du1\aij eu1...\aij du{m_{u}}\aij eu{m_{u}}$
of $\eta(u)$\linebreak{}
($\aij duj\in D\backslash\langle c\rangle$ and $\aij euj\in E\backslash\langle c\rangle$,
except perhaps $\aij du1$ and $\aij eu{m_{u}}$, each of which may
equal $1_{Q}$). Denote $R=\left\{ \aij duj,\aij euj:\,1\leq j\leq m_{u},\,u\in\boldsymbol{u}\right\} $.
 So

\hspace{-0.5cm}$\abs{\iota_{n}\circ\aij h2n\circ\pi\circ\sigma_{n}\circ\eta\circ\pp_{n}(v)}\leq\underset{i=1}{\overset{m}{\sum}}\abs{\iota_{n}\circ\aij h2n\circ\pi\circ\sigma_{n}\circ\eta\circ\pp_{n}(u_{v_{i}})}\leq m\cdot\underset{u\in\boldsymbol{u}}{max}\,\abs{\aij h2n\circ\pi\circ\sigma_{n}\circ\eta(u)}\leq$

$\leq m\cdot\chi_{n}\cdot\underset{u\in\boldsymbol{u}}{max}\,\abs{\pi\circ\sigma_{n}\circ\eta(u)}_{X(\FF_{\ell})}\leq m\cdot\chi_{n}\cdot(\underset{u\in\boldsymbol{u}}{max}\,\{m_{u}\})\cdot\left(\underset{r\in R}{max}\,\abs{\pi(r)}_{X(\FF_{\ell})}+2\abs{t_{n}}\cdot\abs{\pi(c)}_{X(\FF_{\ell})}\right)$
.

\hspace{-0.5cm}Since the translation length is upper-bounded by the
displacement length, it follows that\linebreak{}
\hspace{-0.5cm}$2(\xi_{n}-2c_{n})\cdot\abs{k_{n}}\cdot\left(2m_{z}\cdot\abs{t_{n}}\cdot tr_{\FF_{\ell}}(\pi(c))-J\right)-\chi_{n}\cdot\left(2K(m_{a}+m_{b}+1)+2(m_{a}+m_{b})\cdot\abs{t_{n}}\cdot\abs{\pi(c)}_{X(\FF_{\ell})}\right)\leq$

$\leq m\cdot\chi_{n}\cdot(\underset{u\in\boldsymbol{u}}{max}\,\{m_{u}\})\cdot\left(\underset{r\in R}{max}\,\abs{\pi(r)}_{X(\FF_{\ell})}+2\abs{t_{n}}\cdot\abs{\pi(c)}_{X(\FF_{\ell})}\right)$
.

\hspace{-0.5cm}By rearranging the inequality and taking limsup on
both sides:

\hspace{-0.5cm}$\underset{n\rightarrow\infty}{limsup}\,\abs{k_{n}}\leq\left((m_{a}+m_{b}+m\cdot(\underset{u\in\boldsymbol{u}}{max}\,\{m_{u}\}))\cdot\abs{\pi(c)}_{X(\FF_{\ell})}\right)/\left(2m_{z}\cdot tr_{\FF_{\ell}}(\pi(c))\right)$
.

$ $

As before, by extraction assume that $\{\pp_{n}\}_{\nn}$ is some
constant sequence $\pp_{n}=\pp_{0}$. 

\hspace{-0.5cm}$Q\cong\eta\circ\pp_{o}(L)=\eta\circ\pp_{o}(L)/\sk h_{n}^{(1)}=\eta\circ\pp_{o}(L)/\sk(\iota_{n}\circ h_{n}^{(1)})=$

$=L/\sk(\iota_{n}\circ h_{n}^{(1)}\circ\eta\circ\pp_{o})=L/\sk(\iota_{n}\circ h_{n}^{(1)}\circ\eta\circ\pp_{n})$

\hspace{-0.5cm}So $(Q,\eta)$ is SQ-isomorphic to $(Q,\eta\circ\pp_{0})$.
\begin{rem}
In fact, this analysis, which has been illustrated for a resolution
of length 2, can be extended to an analysis of a similarly built resolution
of length $\nn$. If every group along the resolution has a single-edged
JSJ decomposition with rigid vertex groups, and the edge group is
maximal abelian in the vertex groups, then each edge generator is
hyperbolic with respect to the next group along the resolution. Iteratively,
at every level the dominant factor is the translation length of the
generator of the edge group of that level, multiplied by the number
of appearances which are made by each edge group, from that level
upwards, in the normal form of the cyclically reduced conjugate of
the edge group generator of the preceding level.
\end{rem}

\end{document}